\newtheorem{defn}{Definition}[section]
\newtheorem{hyp}{Hypothesis}
\newtheorem{theorem}{Theorem}[section]
\newtheorem{prop}{Proposition}[section]
\newtheorem{lemma}{Lemma}[section]
\newtheorem{coro}{Corollary}[section]
\newtheorem{remark}{Remark}[section]
\newtheorem{exam}{Example}[section]
\newcommand{\ml}{\mathcal}
\newcommand{\mb}{\mathbb}
\DeclareMathOperator{\intt}{int}
\DeclareMathOperator{\extt}{ext}
\DeclareMathOperator{\bdd}{bdd}
\title{Decay properties and asymptotic behaviors for a wave equation with general strong damping}
\author{Wenhui Chen\thanks{Wenhui Chen (wenhui.chen.math@gmail.com)}}
\author[2]{Ryo Ikehata\thanks{Ryo Ikehata (ikehatar@hiroshima-u.ac.jp)}}
\affil[1]{School of Mathematical Sciences, Shanghai Jiao Tong University, 200240 Shanghai, China}
\affil[2]{Department of Mathematics, Division of Educational Sciences, Graduate School of Humanities and Social Sciences, Hiroshima University, 739-8524 Higashi-Hiroshima, Japan}
\date{}
\begin{document}

\maketitle
\begin{abstract}
	\medskip
In this paper, we study the Cauchy problem for a wave equation with general strong damping $-\mu(|D|)\Delta u_t$ motivated by [Tao, \emph{Anal. PDE} (2009)] and [Ebert-Girardi-Reissig, \emph{Math. Ann.} (2020)]. By employing energy methods in the Fourier space and WKB analysis, we derive decay estimates for solutions under a large class of $\mu(|D|)$. In particularly, a threshold $\lim\nolimits_{|\xi|\to\infty}\mu(|\xi|)=\infty$ is discovered for the regularity-loss phenomenon, where $\mu(|\xi|)$ denotes the symbol of $\mu(|D|)$. Furthermore, we investigate different asymptotic profiles of solution with additionally $L^1$ initial data, where some refined estimates in the sense of enhanced decay rate and reduced regularity are found. The derived results almost cover the known results with sufficiently small loss.\\
	
	\noindent\textbf{Keywords:} wave equation, general strong damping, Cauchy problem, decay properties, regularity-loss phenomenon, asymptotic profiles.\\
	
	\noindent\textbf{AMS Classification (2020)} 35L05, 35L15, 35B40
\end{abstract}
\fontsize{12}{15}
\selectfont
%\tableofcontents
\section{Introduction}
\subsection{Background of strongly damped wave models}
In recent twenty years, the strongly damped wave equation (or the so-called viscoelastic damped wave equation), namely,
\begin{align}\label{Strong_Damped_Waves}
	u_{tt}-\Delta u-\Delta u_t=0,
\end{align}
catches a lot of attentions. Indeed, the model \eqref{Strong_Damped_Waves} originates from a linearized Kuznetsov's equation, which is a fundamental model in nonlinear acoustics for describing high-intensity ultrasonic waves. Kuznetsov's equation was established by Lighthill's scheme approximation procedures for Navier-Stokes-Fourier system under a given irrotational flow. It has some applications in medical imaging and therapy, ultrasound cleaning and welding (see, for example, \cite{Abramov=1999}). Because of the higher-order derivative $-\Delta u_t$, the model \eqref{Strong_Damped_Waves} does not belong to hyperbolic equations anymore, but parabolic effect arises. This vital term will bring some difficulties in mathematical analysis. We refer interested reads to \cite[Subection 11.4.5, Section 14.3, Subsection 19.5.1]{Ebert-Reissig-book} for some basic knowledge for \eqref{Strong_Damped_Waves}.

At the first stage of the corresponding Cauchy problem 
\begin{align}\label{Cauchy_Strong_Damped}
\begin{cases}
u_{tt}-\Delta u-\Delta u_t=0,&x\in\mb{R}^n,\ t>0,\\
u(0,x)=u_0(x),\ u_t(0,x)=u_1(x),&x\in\mb{R}^n,
\end{cases}
\end{align}
 the pioneering works \cite{Ponce=1985,Shibata=2000} investigated some $L^p-L^q$ decay properties of solutions with suitable $1\leqslant p\leqslant q\leqslant\infty$. Moreover, the authors of \cite{D'Abbicco-Reissig=2014} obtained some decay estimates of solution with $L^2\cap L^1$ data. The additional $L^1$ regularity for initial data is the bringer of additional decay rate. Later, some asymptotic profiles of solution in an abstract framework have been derived by \cite{Ikehata-Todorova-Yordanov=2013} with the aid of the spectral analysis and energy estimates. To be specific, the asymptotic profile is depicted by the diffusion-wave solution, i.e.
 \begin{align*}
 u(t,x)\sim u_{\mathrm{diff}}(t,x):=\mathrm{e}^{-\frac{\ml{A}t}{2}}\left(\cos(\ml{A}^{1/2}t)u_0(x)+\frac{\sin(\ml{A}^{1/2}t)}{\ml{A}^{1/2}}u_1(x)\right),
 \end{align*}
where the linear operator can be opted by $\ml{A}=-\Delta$ in the consideration of the Cauchy problem \eqref{Cauchy_Strong_Damped}. Concerning optimal $L^2$-estimates in the framework of weighted $L^1$ initial data, \cite{Ikehata=2014} and \cite{Ikehata-Onodera=2017} obtained the results for higher-dimensions ($n\geqslant 3$) and lower-dimensions $(n=1,2)$, respectively, by using the refined Fourier analysis. A further topic on higher-order profiles for \eqref{Cauchy_Strong_Damped} has been done by \cite{Barrera-Volkmer=2019,Barrera-Volkmer=2021,Michihisa=2021}. So far the qualitative properties of solutions to the initial value problem \eqref{Cauchy_Strong_Damped} are well-understood.

Recently, strongly motivated by the pioneering paper \cite{Ghisi-Gobbino-Haraux=2016}, the following Cauchy problem:
\begin{align}\label{Cauchy_Very_Strong_Damped}
	\begin{cases}
		u_{tt}-\Delta u+(-\Delta)^{\theta} u_t=0,&x\in\mb{R}^n,\ t>0,\\
		u(0,x)=u_0(x),\ u_t(0,x)=u_1(x),&x\in\mb{R}^n,
	\end{cases}
\end{align}
with $\theta\in(1,\infty)$ has been taken into considerations by \cite{Ikehata-Iyota=2018,Fukushima-Ikehata-Michihisa=2021,Charao-Torres-Ikehata=2020} and references therein. They found regularity-loss decay properties of energy for any $\theta\in(1,\infty)$. The regularity-loss structure, which means that to obtain decay estimates we require higher-regularity of initial data, was primarily discovered by Professor Shuichi Kawashima and his coauthors in the studies of the dissipative Timoshiko system \cite{Ide-Haramoto-Kawashima=2008} and hyperbolic-elliptic systems \cite{Hosono-Kawashima=2006}.

\subsection{Motivation of general strong damping}
Summarizing the previous results, the authors in the literature successfully obtained a threshold $\theta_{\mathrm{reg}}=1$ in the scale $\{(-\Delta)^{\theta}\}_{\theta\geqslant0}$ (or the equivalent form $\{|D|^{2\theta}\}_{\theta\geqslant0}$) of the damping term $(-\Delta)^{\theta}u_t$. In other words, when $\theta\in[0,\theta_{\mathrm{reg}}]$ in the Cauchy problem \eqref{Cauchy_Very_Strong_Damped}, one may derive some stabilities without any regularity-loss (especially, an exponential decay for large frequencies).  For another, decay properties of regularity-loss occurs if one considers $\theta\in(\theta_{\mathrm{reg}},\infty)$.  

Nevertheless, the scale $\{|D|^{2\theta}\}_{\theta\geqslant0}$ is too rough to verify the critical damping in the sense of regularity-loss structure. Motivated by the innovative works \cite{Tao=2009} for logarithmic type operator in Navier-Stokes equations and \cite{Ebert-Girardi-Reissig=2020} for general scale of nonlinear terms in the classical damped wave model, we would like to describe the threshold of regularity-loss structure by the symbol of $\mu(|D|)$ in the damping term $-\mu(|D|) \Delta u_t$. That is to say that we are interested in the model 
\begin{align}\label{Eq_General_Damped_Waves}
	u_{tt}-\Delta u-\mu(|D|) \Delta u_t=0
\end{align}
under some assumptions for $\mu(|D|)$.

Another motivation for the investigation of \eqref{Eq_General_Damped_Waves} is to seize some effects from the general damping term, which may cause new phenomena. To the best of authors' knowledge, not only decay properties but also asymptotic profiles of solutions are completely unknown. We will partly answer these interesting questions in the present paper by taking some hypotheses for the pseudo-differential operator $\mu(|D|)$.

\subsection{Main purposes of the paper}
 Motivated by the recent studies on strongly damped wave models, we consider the following wave equation with general strong damping in the present work:
\begin{align}\label{Eq_Linear_General_Damped_Waves}
\begin{cases}
u_{tt}-\Delta u-\mu(|D|)\Delta u_t=0,&x\in\mb{R}^n,\ t>0,\\
u(0,x)=u_0(x),\ u_t(0,x)=u_1(x),&x\in\mb{R}^n,
\end{cases}
\end{align}
where the differential operator $\mu(|D|)$ of strong damping with the pseudo-differential operator $|D|$ (carrying its symbol $|\xi|$) can be understood by the mean of Fourier transform
\begin{align*}
	\ml{F}\left( \mu(|D|)f(x)\right):=\mu(|\xi|)\hat{f}(\xi)
\end{align*} 
for $f\in\ml{Z}'$, with a non-negative and continuous (with respect to $|\xi|$) function $\mu(|\xi|)$. In the last statement, $\ml{Z}'$ stands for the topological dual space to the subspace of the Schwartz space $\ml{S}$ consisting of function $\mathrm{d}_{\xi}^k\hat{f}(0)=0$ for all $k\in\mb{N}_0$, in other words, $\ml{Z}'$ is the factor space $\ml{S}'/\ml{P}$, in which $\ml{P}$ is the space of all polynomials. Our main interest in the paper is to investigate some qualitative properties of solutions to the Cauchy problem \eqref{Eq_Linear_General_Damped_Waves} perturbed by the differential operator $\mu(|D|)$.

As we mentioned in the preceding part of the text, the general pseudo-differential operator $\mu(|D|)$ is treated as a perturbation part of strong damping $-\Delta u_t$ so that it is reasonable to assume:
\begin{hyp}\label{Hyp_Diss}
	Let us suppose that $\mu=\mu(r)$ is a non-negative function such that $\mu\in\ml{C}([0,\infty))$. Moreover, it satisfies
	\begin{align}\label{A_Hyp}
		\lim\limits_{r\downarrow0}r\mu(r)=0\ \ \mbox{as well as}\ \ \lim\limits_{r\to\infty}\frac{1}{r\mu(r)}=0.
	\end{align}
\end{hyp}
Frankly speaking, Hypothesis \ref{Hyp_Diss} allows us to consider a large class of differential operator $\mu(|D|)$ in the Cauchy problem \eqref{Eq_Linear_General_Damped_Waves}. Let us give some examples.
\begin{exam}
	Hypothesis \ref{Hyp_Diss} holds for the following function $\mu=\mu(r)$:
	\begin{enumerate}[(1)]
		\item fractional type: $\mu(r)=pr^{\pm\epsilon}$ with $\epsilon\in[0,1)$ and $p\in(0,\infty)$;
		\item oscillating type: $\mu(r)=p(1+\sin r)+q(1+\cos r)$ with $p,q\in(0,\infty)$;
		\item logarithmic type: $\mu(r)=(\log(1+r))^{\gamma}$ with $\gamma\in(-1,\infty)$;
		\item k-logarithmic type: $\mu(r)=\underbrace{\log\big(1+\log\big(1+\cdots\log\big(\log}_{k\ times \ \log}(1+r)\big)\big)\big)$ with $k\in\mb{N}$.
	\end{enumerate}
	Moreover, we also can consider a function $\mu\in\ml{C}([0,\infty))$ but $\mu\not\in\ml{C}^1([0,\infty))$, for instance,
	\begin{align*}
		\mu(r)=\begin{cases}
			(r-1)^2\sin(\frac{1}{r-1})&\mbox{if}\ \ r\neq1,\\
			0&\mbox{if}\ \ r=1.
		\end{cases}
	\end{align*}
\end{exam}
\begin{remark}
The second assumption of \eqref{A_Hyp} in Hypothesis \ref{Hyp_Diss} may be changed into $\lim\nolimits_{r\to\infty}r\mu(r)\geqslant c\geqslant 0$. Then, we can derive an exponential stability for large frequencies. More specific explanations will be shown in the end of Subsection \ref{Section_Decay_Estimates}. A typical example, which can be no contained in Hypothesis \ref{Hyp_Diss}, takes the form $\mu(r)=r^{-2}\log(1+r^{2\sigma})$ with $\sigma\in(1/2,\infty)$ (cf. \cite{Charao-DAbbicco-Ikehata}), but it adapts Hypothesis \ref{Hyp_Diss_C}.
\end{remark}

Our first purpose in the present paper is to understand decay properties of solutions to the damped wave equation \eqref{Eq_Linear_General_Damped_Waves} under Hypothesis \ref{Hyp_Diss}. In Section \ref{Section_Decay_Prop}, by developing two crucial lemmas to describe decay rates of some Fourier multipliers, we derive decay estimates of solutions with the aid of energy methods in the Fourier space. Among these results, a new threshold related to the symbol of $\mu(|D|)$, i.e.
\begin{align}\label{Cond_Regularity_Introdu}
\lim\limits_{|\xi|\to\infty}\mu(|\xi|)=\infty,
\end{align}
for determining regularity-loss decay properties is discovered. Namely, the regularity-loss phenomenon will vanish away \emph{if and only if} the condition \eqref{Cond_Regularity_Introdu} does not hold.

Furthermore, to derive sharper estimates for the solution itself in lower-dimensions and asymptotic profiles for large-time, we apply asymptotic expansions associated with WKB analysis. In Section \ref{Section_Asym_Prof}, we demonstrate the asymptotic profiles of solution in the $L^2$ norm for $t\gg1$ as follows:
\begin{align*}
u(t,x)&\sim u_{\mathrm{loss}}(t,x):= \chi_{\intt}(D)\mathrm{e}^{-\frac{1}{2}\mu(|D|)|D|^2t}\left(\cos(|D|t)u_0(x)+\frac{\sin(|D|t)}{|D|}u_1(x)\right)\\
&\qquad\qquad\quad\ \  \quad+(1-\chi_{\intt}(D))\mathrm{e}^{-\frac{t}{\mu(|D|)}}\left(u_0(x)+\frac{1}{\mu(|D|)|D|^2}u_1(x)\right)\ \ \mbox{if}\ \ \lim\limits_{r\to\infty}\mu(r)=\infty,
\end{align*}
and
\begin{align*}
	u(t,x)&\sim u_{\mathrm{nolo}}(t,x):=\mathrm{e}^{-\frac{1}{2}\mu(|D|)|D|^2t}\left(\cos(|D|t)u_0(x)+\frac{\sin(|D|t)}{|D|}u_1(x)\right)\ \ \mbox{if}\ \ \lim\limits_{r\to\infty}\mu(r)<\infty.
\end{align*}
For this reason, the condition \eqref{Cond_Regularity_Introdu} is also an important threshold for clarifying different asymptotic profiles. By subtracting the corresponding asymptotic profiles ($u_{\mathrm{loss}}(t,x)$ when regularity-loss; $u_{\mathrm{nolo}}(t,x)$ when regularity-no-loss), we will observe some enhanced decay rates and reduced regularities in comparison with the estimate for the solution itself. We point out these nomenclatures \emph{enhanced decay rate} and \emph{reduced regularity} throughout this paper in the sense that:
\begin{itemize}
	\item enhanced decay rate $(1+t)^{-\alpha}$ with $\alpha>0$ is valid, if $\|\ml{A}u_0\|\lesssim (1+t)^{-\gamma}\|u_0\|$ and $\|\ml{A}u_0-\ml{B}u_0\|\lesssim (1+t)^{-\gamma-\alpha}\|u_0\|$ hold;
    \item  reduced (Sobolev) regularity $\dot{H}^{-\sigma}$ with $\sigma>0$ is valid, if $\|\ml{A}u_0\|\lesssim \|u_0\|_{\dot{H}^s}$ and $\|\ml{A}u_0-\ml{B}u_0\|\lesssim \|u_0\|_{\dot{H}^{s-\sigma}}$ hold;
\end{itemize}
 where $\ml{A}$ and $\ml{B}$ stand for the objective solution operator for the equation and our constructed profile, respectively.
 
 Our main contributions in the present paper consist in deriving $L^2$ estimates of solutions (see Theorems \ref{Thm_Energy_Decay} and \ref{Thm_Solution_Itself}), as well as investigating large-time profiles (see Theorem \ref{Thm_Asymptotic_Profiles}) to the wave equation carrying general damping \eqref{Eq_General_Damped_Waves}. These results almost (in the sense of sufficiently small loss) cover the known theorems in literature \cite{Ikehata-Natsume=2012,Charao-daLuz-Ikehata=2013,Ikehata=2014,D'Abbicco-Reissig=2014,Ikehata-Onodera=2017,Ikehata-Iyota=2018,Charao-Torres-Ikehata=2020,Charao-DAbbicco-Ikehata,Fukushima-Ikehata-Michihisa=2021}. We will take them as examples displaying after the statements of our theorems.

%Our next purpose in this work is to investigate singular limits for the following Cauchy problem with small parameter:
%\begin{align}\label{Eq_Singular_Problem}
%\begin{cases}
%\epsilon u_{tt}^{\epsilon}-\Delta u^{\epsilon}-\mu(|D|)\Delta u_t^{\epsilon}=0,&x\in\mb{R}^n,\ t>0,\\
%u^{\epsilon}(0,x)=u_0(x),\ u_t^{\epsilon}(0,x)=u_1(x),&x\in\mb{R}^n,
%\end{cases}
%\end{align}
%where these datum are independent of $\epsilon$ carrying $0<\epsilon\ll 1$. Formally taking $\epsilon=0$ in the last Cauchy problem, it directly turns into
%\begin{align}\label{Eq_Convergence_Problem}
%	\begin{cases}
%		u_t^{0}+\mu(|D|)^{-1}u^0=0,&x\in\mb{R}^n, \ t>0,\\
%		u^0(0,x)=u_0(x),&x\in\mb{R}^n.
%	\end{cases}
%\end{align}

\medskip
\noindent\textbf{Notations:} Throughout this manuscripts, $|D|^s$ and $\langle D\rangle^s$ with $s\geqslant0$ stand for the pseudo-differential operators with symbol $|\xi|^s$ and $\langle \xi\rangle^s$, respectively, carrying the Japanese bracket $\langle\xi\rangle^2:=1+|\xi|^2$. Next, let us define the following zones from the Fourier space
\begin{align*}
	\ml{Z}_{\intt}(\varepsilon)&:=\{\xi\in\mb{R}^n:|\xi|\leqslant\varepsilon\ll1\},\\
	\ml{Z}_{\bdd}(\varepsilon,N)&:=\{\xi\in\mb{R}^n:\varepsilon\leqslant |\xi|\leqslant N\},\\
	\ml{Z}_{\extt}(N)&:=\{\xi\in\mb{R}^n:|\xi|\geqslant N\gg1\}.
\end{align*}  The cut-off functions $\chi_{\intt}(\xi),\chi_{\bdd}(\xi),\chi_{\extt}(\xi)\in \mathcal{C}^{\infty}$ endowing their supports in the zone $\ml{Z}_{\intt}(\varepsilon)$, $\ml{Z}_{\bdd}(\varepsilon/2,2N)$ and $\ml{Z}_{\extt}(N)$, individually, fulfilling $\chi_{\intt}(\xi)+\chi_{\bdd}(\xi)+\chi_{\extt}(\xi)=1$ for all $\xi \in \mb{R}^n$. Furthermore, let us introduce a function space $\dot{H}^{s}_{\mu,\ell}$ related to the homogeneous Sobolev space with $s\in\mb{R}$  and $\ell\geqslant0$ by
\begin{align*}
	\dot{H}^s_{\mu,\ell}:=\left\{f\in\ml{S}'/\ml{P}:\|f\|_{\dot{H}^s_{\mu,\ell}}:=\|\mu(|D|)^{\ell}|D|^sf\|_{L^2}<\infty \right\},
\end{align*} 
where $\mu=\mu(r)\geqslant0$ is assumed to be a continuous function. Particularly, in the case $\mu(r)=r^{\beta}$, we claim $\dot{H}^{s}_{\mu,\ell}=\dot{H}^{s+\beta\ell}$. Hereafter, $c$ and $C$ denote some constants that may be changed from line to line. The symbol $f\lesssim g$ means that there exists a positive constant $C$ fulfilling $f\leqslant Cg$.
\section{Decay properties with additionally $L^1$ initial data}\label{Section_Decay_Prop}
Due to the general operator $\mu(|D|)$ in the evolution equation  of \eqref{Eq_General_Damped_Waves}, the operator
\begin{align*}
	\ml{L}:=\partial_t^2-\Delta-\mu(|D|)\Delta\partial_t 
\end{align*}
is not a hyperbolic operator, even not a $p$-evolution operator (see \cite[Chapter 3, Definition 3.2]{Ebert-Reissig-book}). We may not apply the general theory for such classes of operators. In this section, we will estimate the solution and its higher-order derivatives in the $L^2$ norm by employing suitable energy methods in the Fourier space rather than explicit solution formula. 

\subsection{Estimates by energy method in the Fourier space}
To begin, let us apply the partial Fourier transform with respect to spatial variables $x$ for the linear Cauchy problem \eqref{Eq_Linear_General_Damped_Waves} such that $\hat{u}(t,\xi):=\ml{F}_{x\to\xi}(u(t,x))$. Afterwards, we are able to obtain
\begin{align}\label{Eq_Fourier_Linear}
\begin{cases}
\hat{u}_{tt}+\mu(|\xi|)|\xi|^2\hat{u}_t+|\xi|^2\hat{u}=0,&\xi\in\mb{R}^n,\ t>0,\\
\hat{u}(0,\xi)=\hat{u}_0(\xi),\ \hat{u}_t(0,\xi)=\hat{u}_1(\xi),&\xi\in\mb{R}^n.
\end{cases}
\end{align}

%With the aim of obtaining estimates for energy terms and the solution itself, respectively, we will divide our discussions into two parts: in Subsection \ref{Sub-sec-Energy} we derive estimates for an energy in $\mb{R}^n_{\xi}$ by employing suitable energy methods motivated by the recent paper \cite{Ikehata-Iyota=2018}; nevertheless, in Subsection \ref{Sub-sec-Itself} we investigate estimates for the solution itself in $\mb{R}^n_{\xi}$ by taking consideration of explicit characteristic roots. Finally, some thresholds for distinguishing different decay properties will be announced in Subsection \ref{Sub-sec-Estimates}, whose main tool is based on Taylor expansions.
%\subsection{Energy method in the Fourier space}\label{Sub-sec-Energy}
We now state energy estimates for the Cauchy problem \eqref{Eq_Fourier_Linear} in the below.
\begin{prop}\label{Prop_Pointwise}
Let us assume $\mu\in\ml{C}([0,\infty))$. Then, the following pointwise estimates for the energy terms of the Cauchy problem \eqref{Eq_Fourier_Linear} hold:
\begin{align*}
|\hat{u}_t(t,\xi)|^2+|\xi|^2|\hat{u}(t,\xi)|^2\lesssim \mathrm{e}^{-c\rho(|\xi|)t}\left(|\xi|^2|\hat{u}_0(\xi)|^2+|\hat{u}_1(\xi)|^2\right),
\end{align*}
with $c>0$ for any  $\xi\in\mb{R}^n$ and $t>0$, where the key function locating in the exponent is expressed by
\begin{align}\label{Key_function}
	\rho(|\xi|):=\frac{|\xi|^2\mu(|\xi|)}{1+|\xi|^2\mu(|\xi|)^2}.
\end{align}
\end{prop}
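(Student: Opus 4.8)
The plan is to argue frequency by frequency on the scalar ODE in \eqref{Eq_Fourier_Linear}, constructing a perturbed energy (Lyapunov) functional in the Fourier space. I would first dispose of $\xi=0$: there $\hat{u}(t,0)=\hat{u}_0(0)+t\hat{u}_1(0)$ and $\hat{u}_t(t,0)=\hat{u}_1(0)$ while $\rho(0)=0$, so the asserted bound is the trivial equality $|\hat{u}_1(0)|^2=|\hat{u}_1(0)|^2$. For the rest fix $\xi\neq0$ and abbreviate $m:=\mu(|\xi|)|\xi|^2\geqslant0$, so that $\rho(|\xi|)=m/(1+\mu(|\xi|)m)$; I will repeatedly use the two elementary bounds $\rho(|\xi|)\leqslant m$ (immediate from $1\leqslant 1+\mu(|\xi|)m$) and $\rho(|\xi|)\leqslant|\xi|$ (equivalent to $\mu(|\xi|)|\xi|\leqslant 1+\mu(|\xi|)^2|\xi|^2$, i.e.\ $x\leqslant1+x^2$). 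These are exactly what will let a single, regularity-free choice of multiplier work simultaneously in the under-damped regime $\mu(|\xi|)|\xi|\ll1$ (small $|\xi|$) and the over-damped regime $\mu(|\xi|)|\xi|\gg1$ (large $|\xi|$).

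Next I would introduce $E(t,\xi):=|\hat{u}_t(t,\xi)|^2+|\xi|^2|\hat{u}(t,\xi)|^2$ and the perturbed functional
\[ F(t,\xi):=E(t,\xi)+\rho(|\xi|)\,\mathrm{Re}\bigl(\hat{u}_t(t,\xi)\overline{\hat{u}(t,\xi)}\bigr). \]
Young's inequality gives $\rho(|\xi|)|\hat{u}_t||\hat{u}|\leqslant\frac{\rho(|\xi|)}{2|\xi|}E(t,\xi)$, so combined with $\rho(|\xi|)\leqslant|\xi|$ one obtains the equivalence $\frac12 E(t,\xi)\leqslant F(t,\xi)\leqslant\frac32 E(t,\xi)$. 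Then I would record the two standard multiplier identities obtained from \eqref{Eq_Fourier_Linear}: testing with $\overline{\hat{u}_t}$ and taking real parts yields $\frac{d}{dt}E(t,\xi)=-2m|\hat{u}_t|^2$, and testing with $\overline{\hat{u}}$ and taking real parts yields $\frac{d}{dt}\mathrm{Re}(\hat{u}_t\overline{\hat{u}})=|\hat{u}_t|^2-m\,\mathrm{Re}(\hat{u}_t\overline{\hat{u}})-|\xi|^2|\hat{u}|^2$. Hence
\[ \frac{d}{dt}F(t,\xi)=-(2m-\rho(|\xi|))|\hat{u}_t|^2-\rho(|\xi|)|\xi|^2|\hat{u}|^2-\rho(|\xi|)m\,\mathrm{Re}(\hat{u}_t\overline{\hat{u}}). \]

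To close the estimate I would absorb the cross term by Young's inequality with the weight $2m-\rho(|\xi|)$ (positive since $\rho\leqslant m$), namely $\rho m|\hat{u}_t||\hat{u}|\leqslant\frac{2m-\rho}{2}|\hat{u}_t|^2+\frac{\rho^2m^2}{2(2m-\rho)}|\hat{u}|^2$, and then verify the key algebraic cancellation $\frac{\rho m^2}{2m-\rho}\leqslant|\xi|^2$, which after substituting $m=\mu(|\xi|)|\xi|^2$ reduces to $\rho(|\xi|)\leqslant 2m|\xi|^2/(m^2+|\xi|^2)=2\rho(|\xi|)$ and is therefore automatic. This produces $\frac{d}{dt}F\leqslant-\frac{2m-\rho}{2}|\hat{u}_t|^2-\frac{\rho}{2}|\xi|^2|\hat{u}|^2\leqslant-\frac{\rho(|\xi|)}{2}E(t,\xi)$, using $2m-\rho\geqslant m\geqslant\rho$. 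With the equivalence this gives $\frac{d}{dt}F\leqslant-\frac{1}{3}\rho(|\xi|)F$, so a direct integration (Grönwall) and one more use of the equivalence yield $|\hat{u}_t(t,\xi)|^2+|\xi|^2|\hat{u}(t,\xi)|^2\leqslant 3\,\mathrm{e}^{-\rho(|\xi|)t/3}\bigl(|\xi|^2|\hat{u}_0(\xi)|^2+|\hat{u}_1(\xi)|^2\bigr)$, i.e.\ the claim with $c=1/3$.

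I expect the only real (and still modest) obstacle to be the uniform-in-$\xi$ bookkeeping: choosing one multiplier coefficient --- here $\rho(|\xi|)$ itself --- that simultaneously keeps $F$ comparable to $E$ and forces the dissipation rate to be $\gtrsim\rho(|\xi|)$ across the two qualitatively different frequency regimes, which ultimately comes down entirely to the elementary inequalities $\rho\leqslant m$ and $\rho\leqslant|\xi|$ and requires no $\ml{C}^1$ smoothness of $\mu$, consistently with the hypothesis $\mu\in\ml{C}([0,\infty))$.
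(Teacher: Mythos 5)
Your proof is correct and follows essentially the same route as the paper: an energy method in the Fourier space based on a perturbed Lyapunov functional containing the cross term $\rho(|\xi|)\,\Re(\hat{u}_t\overline{\hat{u}})$, its equivalence with $E_0[\hat u]$, a differential inequality, and Gr\"onwall. The only (cosmetic) difference is that the paper carries the extra term $\beta\rho(|\xi|)\mu(|\xi|)|\xi|^2|\hat u|^2$ inside the functional and tunes a parameter $\beta\in(0,1)$, whereas you fix the coefficient and absorb the damping cross term by Young's inequality via the identities $\rho\leqslant\mu(|\xi|)|\xi|^2$, $\rho\leqslant|\xi|$ and $\mu(|\xi|)|\xi|^4/(\mu(|\xi|)^2|\xi|^4+|\xi|^2)=\rho$; both reductions rest on the same elementary bounds.
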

\begin{remark}
The key function $\rho(|\xi|)$ in \eqref{Key_function}, indeed, are able to describe decay properties of solutions. Under Hypothesis \ref{Hyp_Diss}, we approximate it by
\begin{align}\label{Behavior_of_Key_Function}
\rho(|\xi|)\approx\begin{cases}
|\xi|^2\mu(|\xi|)&\mbox{for}\ \ \xi\in\ml{Z}_{\intt}(\varepsilon),\\
c>0&\mbox{for}\ \ \xi\in\ml{Z}_{\bdd}(\varepsilon,N),\\
\mu(|\xi|)^{-1}&\mbox{for}\ \ \xi\in\ml{Z}_{\extt}(N),
\end{cases}
\end{align}
with $\varepsilon\ll 1$ and $N\gg 1$. Later, we will use the aforementioned behavior of $\rho(|\xi|)$ in each frequency zone to derive decay estimates of solutions in the $L^2$ norm.
\end{remark}
\begin{proof}[Proof of Proposition \ref{Prop_Pointwise}]
First of all, let us introduce an energy functional
\begin{align*}
	E_0[\hat{u}](t,\xi):=|\hat{u}_t(t,\xi)|^2+|\xi|^2|\hat{u}(t,\xi)|^2,
\end{align*}
and some auxiliary functionals
\begin{align*}
	E[\hat{u}](t,\xi)&:=E_0[\hat{u}](t,\xi)+2\beta\rho(|\xi|)\Re\left(\hat{u}(t,\xi)\bar{\hat{u}}_t(t,\xi)\right)+\beta\rho(|\xi|)\mu(|\xi|)|\xi|^2|\hat{u}(t,\xi)|^2,\\
	F[\hat{u}](t,\xi)&:=\mu(|\xi|)|\xi|^2|\hat{u}_t(t,\xi)|^2+\beta\rho(|\xi|)|\xi|^2|\hat{u}(t,\xi)|^2,\\
	R[\hat{u}](t,\xi)&:=\beta\rho(|\xi|)|\hat{u}_t(t,\xi)|^2,
\end{align*}
with a suitable constant $\beta>0$ to be determined later. Then, multiplying \eqref{Eq_Fourier_Linear}$_1$ by $\bar{\hat{u}}_t$ and $\beta\rho(|\xi|)\bar{\hat{u}}$, respectively, one owns
\begin{align*}
	\frac{\mathrm{d}}{\mathrm{d}t}E_0[\hat{u}](t,\xi)+2\mu(|\xi|)|\xi|^2|\hat{u}_t(t,\xi)|^2=0,
\end{align*}
as well as
\begin{align*}
	\frac{\mathrm{d}}{\mathrm{d}t}\left(2\beta\rho(|\xi|)\Re\left(\hat{u}(t,\xi)\bar{\hat{u}}_t(t,\xi)\right)+\beta\rho(|\xi|)\mu(|\xi|)|\xi|^2|\hat{u}(t,\xi)|^2\right)+2\beta\rho(|\xi|)|\xi|^2|\hat{u}(t,\xi)|^2=2R[\hat{u}](t,\xi).
\end{align*}
The sum of last two equations shows
\begin{align*}
	\frac{\mathrm{d}}{\mathrm{d}t}E[\hat{u}](t,\xi)+2F[\hat{u}](t,\xi)=2R[\hat{u}](t,\xi).
\end{align*}

We now choose the parameter $\beta$ belonging to $(0,1)$, which follows
\begin{align}\label{Ineq_03}
	\beta\rho(|\xi|)\leqslant \beta\mu(|\xi|)|\xi|^2
\end{align}
from the setting of the key function \eqref{Key_function}. Due to the fact that
\begin{align*}
R[\hat{u}](t,\xi)&\leqslant \beta\mu(|\xi|)|\xi|^2|\hat{u}_t(t,\xi)|^2\leqslant \beta F[\hat{u}](t,\xi),
\end{align*}
we immediately obtain
\begin{align}\label{Ineq_01}
	\frac{\mathrm{d}}{\mathrm{d}t}E[\hat{u}](t,\xi)+2(1-\beta)F[\hat{u}](t,\xi)\leqslant 0.
\end{align}
Using Cauchy's inequality in the next form:
\begin{align*}
	2\Re\left(\hat{u}(t,\xi)\bar{\hat{u}}_t(t,\xi)\right)\leqslant\frac{|\hat{u}_t(t,\xi)|^2}{|\xi|}+|\xi|\,|\hat{u}(t,\xi)|^2,
\end{align*}
one may arrive at
\begin{align*}
	E[\hat{u}](t,\xi)&\leqslant E_0[\hat{u}](t,\xi)+\beta\rho(|\xi|)\left(\frac{|\hat{u}_t(t,\xi)|^2}{|\xi|}+|\xi|\,|\hat{u}(t,\xi)|^2\right)+\beta\rho(|\xi|)\mu(|\xi|)|\xi|^2|\hat{u}(t,\xi)|^2\\
	&= \left(1+\frac{\beta\rho(|\xi|)}{|\xi|}\right)|\hat{u}_t(t,\xi)|^2+\beta|\xi|^2\left(\frac{1}{\beta}+\frac{\rho(|\xi|)}{|\xi|}+\rho(|\xi|)\mu(|\xi|)\right)|\hat{u}(t,\xi)|^2.
\end{align*}
Actually, there exist (large) positive constants $M_1$ and $M_2$ such that
\begin{align}\label{Ineq_04}
	(\beta\rho(|\xi|)+|\xi|)\rho(|\xi|)&\leqslant M_1|\xi|^3\mu(|\xi|),\\
	\frac{1}{\beta}+\frac{\rho(|\xi|)}{|\xi|}+\rho(|\xi|)\mu(|\xi|)&\leqslant M_2.\label{Ineq_05}
\end{align}
\begin{remark}
In order to find such constant $M_1$, we know
\begin{align*}
(\beta\rho(|\xi|)+|\xi|)\rho(|\xi|)&=\frac{\beta|\xi|\mu(|\xi|)+1+|\xi|^2\mu(|\xi|)^2}{(1+|\xi|^2\mu(|\xi|)^2)^2}|\xi|^3\mu(|\xi|)\\
&\leqslant \frac{1+2|\xi|^2\mu(|\xi|)^2+\frac{1}{4}\beta^2}{1+2|\xi|^2\mu(|\xi|)^2+|\xi|^4\mu(|\xi|)^4}|\xi|^3\mu(|\xi|)\leqslant M_1|\xi|^3\mu(|\xi|),
\end{align*}
in which we fixed $M_1=1+\beta^2/4$. To continue, for the existence of $M_2$ we observe that
\begin{align*}
	\frac{1}{\beta}+\frac{\rho(|\xi|)}{|\xi|}+\rho(|\xi|)\mu(|\xi|)&\leqslant \frac{1}{\beta}+\frac{2|\xi|^2\mu(|\xi|)^2+\frac{1}{4}}{1+|\xi|^2\mu(|\xi|)^2}\leqslant M_2,
\end{align*}
where we chose $M_2=4+2/\beta$.
\end{remark}
For these reasons, we have
\begin{align}\label{Ineq_02}
\rho(|\xi|)E[\hat{u}](t,\xi)&\leqslant M_1|\xi|^2\mu(|\xi|)|\hat{u}_t(t,\xi)|^2+M_2\beta\rho(|\xi|)|\xi|^2|\hat{u}(t,\xi)|^2\notag\\
&\leqslant (M_1+M_2)F[\hat{u}](t,\xi).
\end{align}
The consideration of \eqref{Ineq_01} associated with \eqref{Ineq_02} leads to
\begin{align*}
	\frac{\mathrm{d}}{\mathrm{d}t}E[\hat{u}](t,\xi)+\frac{2(1-\beta)}{M_1+M_2}\rho(|\xi|)E[\hat{u}](t,\xi)\leqslant 0,
\end{align*}
which yields from Gr\"onwall's inequality that
\begin{align*}
	E[\hat{u}](t,\xi)\leqslant \mathrm{e}^{-c\rho(|\xi|)t}E[\hat{u}](0,\xi),
\end{align*}
with $c=2(1-\beta)/(M_1+M_2)>0$.

The ultimate procedure of the proof is to control the desired energy $E_0[\hat{u}](t,\xi)$ by $E[\hat{u}](t,\xi)$. For one thing, according to
\begin{align*}
	2\beta\rho(|\xi|)\Re\left(\hat{u}(t,\xi)\bar{\hat{u}}_t(t,\xi)\right)\leqslant\beta\rho(|\xi|)\mu(|\xi|)|\xi|^2|\hat{u}(t,\xi)|^2+\frac{\beta\rho(|\xi|)}{\mu(|\xi|)|\xi|^2}|\hat{u}_t(t,\xi)|^2,
\end{align*} 
and applying \eqref{Ineq_03}, one can get
\begin{align*}
	E[\hat{u}](t,\xi)&\geqslant \left(1-\frac{\beta\rho(|\xi|)}{\mu(|\xi|)|\xi|^2}\right)|\hat{u}_t(t,\xi)|^2+|\xi|^2|\hat{u}(t,\xi)|^2\\
	&\geqslant (1-\beta)E_0[\hat{u}](t,\xi).
\end{align*}
For another, because of
\begin{align*}
	2\Re\left(\hat{u}(t,\xi)\bar{\hat{u}}_t(t,\xi)\right)\leqslant \mu(|\xi|)|\xi|^2|\hat{u}(t,\xi)|^2+\frac{1}{\mu(|\xi|)|\xi|^2}|\hat{u}_t(t,\xi)|^2,
\end{align*}
it is clear that
\begin{align*}
	E[\hat{u}](t,\xi)&\leqslant E_0[\hat{u}](t,\xi)+2\beta\rho(|\xi|)\mu(|\xi|)|\xi|^2|\hat{u}(t,\xi)|^2+\frac{\beta\rho(|\xi|)}{\mu(|\xi|)|\xi|^2}|\hat{u}_t(t,\xi)|^2\\
	&\leqslant E_0[\hat{u}](t,\xi)+2\beta|\xi|^2|\hat{u}(t,\xi)|^2+\beta|\hat{u}_t(t,\xi)|^2\leqslant 3E_0[\hat{u}](t,\xi)
\end{align*}
for any $t\geqslant0$, where we used $\rho(|\xi|)\mu(|\xi|)\leqslant 1$ and \eqref{Ineq_03} again. Summarizing the obtained estimates, we conclude the following chain:
\begin{align*}
	E_0[\hat{u}](t,\xi)&\leqslant \frac{1}{1-\beta}E[\hat{u}](t,\xi)\leqslant\frac{3}{1-\beta}\mathrm{e}^{-c\rho(|\xi|)t}E_0[\hat{u}](0,\xi).
\end{align*}
The proof has been finished.
\end{proof}
\subsection{Estimates for some Fourier multipliers}
To describe decay properties influenced by general strong damping for small frequencies, i.e. the Fourier multiplier $\chi_{\intt}(\xi)\mathrm{e}^{-c\rho(|\xi|)t}$, we may introduce the set $A_{\intt}^{n,s}$ containing some behaviors for $\mu(r)$ as $r\leqslant\varepsilon\ll 1$.
\begin{defn}\label{Defn_A_intt}
Let $s\in\mb{R}$ and $0<\varepsilon\ll 1$. We introduce a set
\begin{align*}
A_{\intt}^{n,s}:=\left\{\alpha_{n,s}\geqslant0:\int_0^{\varepsilon}r^{2s+n-1-2\alpha_{n,s}}\mu(r)^{-\alpha_{n,s}}\mathrm{d}r<\infty \right\}.
\end{align*}
Moreover, we denote by $\alpha_{n,s}^m:=\sup A_{\intt}^{n,s}<\infty$ which means $\alpha_{n,s}^m-2\delta\in A_{\intt}^{n,s}$ for any $\delta>0$.
\end{defn}
\begin{lemma}\label{Lem_Decay_Small}
Let us assume $\mu\in\ml{C}([0,\infty))$ with Hypothesis \ref{Hyp_Diss}. Then, the following estimates for the Fourier multiplier in small frequency zone hold:
\begin{align*}
\left\|\chi_{\intt}(\xi)|\xi|^s\mathrm{e}^{-c|\xi|^2\mu(|\xi|)t} \right\|_{L^2}\lesssim(1+t)^{-\frac{1}{2}\alpha_{n,s}^m+\delta},
\end{align*}
with $c>0$ and $s>-n/2$ for any $\delta>0$ and any $t>0$, where the number $\alpha_{n,s}^m$ was introduced in Definition \ref{Defn_A_intt}.
\end{lemma}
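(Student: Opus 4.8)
The plan is to estimate the $L^2$ norm by passing to polar coordinates in the Fourier integral and reducing to a one-dimensional integral over the radial variable $r=|\xi|\in(0,\varepsilon)$. Concretely, I would write
\begin{align*}
\left\|\chi_{\intt}(\xi)|\xi|^s\mathrm{e}^{-c|\xi|^2\mu(|\xi|)t}\right\|_{L^2}^2\lesssim\int_0^{\varepsilon}r^{2s+n-1}\mathrm{e}^{-2c r^2\mu(r)t}\,\mathrm{d}r,
\end{align*}
so the whole matter is to bound $I(t):=\int_0^{\varepsilon}r^{2s+n-1}\mathrm{e}^{-2c r^2\mu(r)t}\,\mathrm{d}r$ by $(1+t)^{-\alpha_{n,s}^m+2\delta}$. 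For $t$ bounded (say $t\le 1$) the integral is finite precisely because $s>-n/2$, so the estimate is trivial there; the content is the regime $t\gg1$.

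For $t\gg1$ the idea is the standard "split at the scale where the exponent is of order one" trick, but adapted to the general weight $\mu$. Fix $\delta>0$ and choose an exponent $\alpha:=\alpha_{n,s}^m-2\delta\in A_{\intt}^{n,s}$, which by Definition \ref{Defn_A_intt} means $\int_0^{\varepsilon}r^{2s+n-1-2\alpha}\mu(r)^{-\alpha}\,\mathrm{d}r<\infty$. The trick is to use the elementary bound $\mathrm{e}^{-y}\le C_\alpha\,y^{-\alpha}$ valid for all $y>0$ (with $C_\alpha=(\alpha/\mathrm{e})^{\alpha}$ when $\alpha>0$, and trivially $C_0=1$ when $\alpha=0$), applied with $y=2c r^2\mu(r)t$. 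This yields
\begin{align*}
I(t)\le C_\alpha\,(2ct)^{-\alpha}\int_0^{\varepsilon}r^{2s+n-1-2\alpha}\mu(r)^{-\alpha}\,\mathrm{d}r\lesssim t^{-\alpha}=t^{-\alpha_{n,s}^m+2\delta},
\end{align*}
where the integral is finite exactly by the membership $\alpha\in A_{\intt}^{n,s}$. Taking square roots and combining with the easy small-$t$ bound gives $(1+t)^{-\frac12\alpha_{n,s}^m+\delta}$ after renaming $\delta$; note the convenient feature that one does not even need to split the integral since the pointwise bound $\mathrm{e}^{-y}\lesssim y^{-\alpha}$ already encodes the tradeoff.

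The main subtlety I anticipate is not the decay estimate itself but making sure the two defining conditions in Definition \ref{Defn_A_intt} and Hypothesis \ref{Hyp_Diss} are used consistently: one must check that $\alpha_{n,s}^m\geq0$ is actually attained as a supremum of a nonempty set (so that the exponent $\alpha$ exists), which uses $\lim_{r\downarrow0}r\mu(r)=0$ together with $s>-n/2$ to guarantee $0\in A_{\intt}^{n,s}$, and one must be slightly careful that when $\alpha_{n,s}^m=0$ the statement is just the trivial $L^2$-finiteness bound $(1+t)^{\delta}$, which still holds. A secondary point worth a line is handling the cut-off $\chi_{\intt}$ rigorously (its support lies in $\ml{Z}_{\intt}(\varepsilon)$, so the radial integral really is over $(0,\varepsilon]$ up to the harmless smooth tail in $(\varepsilon/2,\varepsilon)$, but since $\chi_{\intt}$ is bounded by $1$ this changes nothing). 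Beyond that the argument is a routine one-dimensional estimate, so I do not expect a genuine obstacle.
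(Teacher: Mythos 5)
Your proposal is correct and follows essentially the same route as the paper: reduce to the radial integral $I(t)=\int_0^{\varepsilon}r^{2s+n-1}\mathrm{e}^{-2cr^2\mu(r)t}\,\mathrm{d}r$, handle $t\leqslant 1$ by the integrability coming from $s>-n/2$, and for $t>1$ insert the factor $\left(2cr^2\mu(r)t\right)^{\alpha_{n,s}^m-2\delta}\mathrm{e}^{-2cr^2\mu(r)t}\lesssim 1$ to pull out $t^{-\alpha_{n,s}^m+2\delta}$ against the finite integral guaranteed by $\alpha_{n,s}^m-2\delta\in A_{\intt}^{n,s}$. No gaps; your added remarks on the nonemptiness of $A_{\intt}^{n,s}$ and the cut-off are harmless refinements the paper leaves implicit.
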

\begin{exam}\label{Examp_I}
Let us consider the fractional power case $\mu(|\xi|)=|\xi|^{\beta}$ with $\beta>-2$. According to Lemma \ref{Lem_Decay_Small}, the set is re-considered as
\begin{align*}
A_{\intt}^{n,s}=\left\{\alpha_{n,s}\geqslant0: \int_0^{\varepsilon}r^{2s+n-1-(2+\beta)\alpha_{n,s}}\mathrm{d}r<\infty \right\}=\left\{0\leqslant \alpha_{n,s}<\frac{2s+n}{2+\beta} \right\},
\end{align*}
which leads to $\alpha_{n,s}^m=(2s+n)/(2+\beta)$. At this time, concerning $s>-n/2$, we get a decay estimate
\begin{align}\label{Estimate_Examp_1}
\left\|\chi_{\intt}(\xi)|\xi|^s\mathrm{e}^{-c|\xi|^2\mu(|\xi|)t} \right\|_{L^2}\lesssim(1+t)^{-\frac{2s+n}{2(2+\beta)}+\delta},
\end{align}
with a sufficiently small constant $\delta>0$. Actually, providing that one avoids using Lemma \ref{Lem_Decay_Small}, we can get sharp decay estimates by explicit computations (see, for example, \cite[Lemma 2.1]{Charao-daLuz-Ikehata=2013}) as follows:
\begin{align}\label{Sharp_Example}
\left\|\chi_{\intt}(\xi)|\xi|^s\mathrm{e}^{-c|\xi|^2\mu(|\xi|)t} \right\|_{L^2}&\lesssim\left(\int_0^{\varepsilon}r^{2s+n-1}\mathrm{e}^{-2cr^{2+\beta}t}\mathrm{d}r \right)^{1/2}\notag\\
&\lesssim(1+t)^{-\frac{2s+n}{2(2+\beta)}},
\end{align}
with $s>-n/2$. In the comparison with the sharp estimate \eqref{Sharp_Example}, the derived  estimate (by Lemma \ref{Lem_Decay_Small}) is almost sharp in the sense of an arbitrarily small loss $\delta>0$. That is to say: considering the fractional type differential operator $\mu(|D|)$, an application of Lemma \ref{Lem_Decay_Small} only generates an arbitrarily small loss on the decay rate.
\end{exam}
\begin{proof}[Proof of Lemma \ref{Lem_Decay_Small}]
By applying the change of variable, our target is reduced by
\begin{align*}
	\left\|\chi_{\intt}(\xi)|\xi|^s\mathrm{e}^{-c|\xi|^2\mu(|\xi|)t} \right\|_{L^2}^2&=\int_{|\xi|\leqslant\varepsilon}|\xi|^{2s}\mathrm{e}^{-2c|\xi|^2\mu(|\xi|)t}\mathrm{d}\xi\\
	&\lesssim\int_0^{\varepsilon}r^{2s+n-1}\mathrm{e}^{-2cr^2\mu(r)t}\mathrm{d}r=:I_1(t).
\end{align*}
For small time $t\leqslant 1$, thanks to our assumption $2s+n>0$ it is trivial that
\begin{align*}
	I_1(t)\lesssim\int_0^{\varepsilon}r^{2s+n-1}\mathrm{d}r=\frac{1}{2s+n}\varepsilon^{2s+n}\lesssim 1.
\end{align*}
Let us now turn to the case $t>1$. Benefiting from  Definition \ref{Defn_A_intt}, we rewrite and estimate $I_1(t)$ as
\begin{align*}
	I_1(t)&=(2ct)^{-\alpha_{n,s}^m+2\delta}\int_0^{\varepsilon}\left(2cr^2\mu(r)t\right)^{\alpha_{n,s}^m-2\delta}\mathrm{e}^{-2cr^2\mu(r)t}r^{2s+n-1-2\alpha_{n,s}^m+4\delta}\mu(r)^{-\alpha_{n,s}^m+2\delta}\mathrm{d}r\\
	&\lesssim t^{-\alpha_{n,s}^m+2\delta}\int_0^{\varepsilon}r^{2s+n-1-2(\alpha_{n,s}^m-2\delta)}\mu(r)^{-\alpha_{n,s}^m+2\delta}\mathrm{d}r\lesssim t^{-\alpha_{n,s}^m+2\delta},
\end{align*}
because of the fact that $\alpha_{n,s}^{m}-2\delta\in A_{\intt}^{n,s}$ for any $\delta>0$. All in all, the proof is completed.
\end{proof}
\begin{remark}
In Definition \ref{Defn_A_intt}, if $\sup A_{\intt}^{n,s}=\max A_{\intt}^{n,s}$ holds, then we take the constant $\delta=0$ because of $\alpha_{n,s}^m\in A_{\intt}^{n,s}$. Namely, we avoid the small $\delta$-loss in this case.
\end{remark}
\begin{remark}
In Definition \ref{Defn_A_intt}, provided that $A_{\intt}^{n,s}$ is unbounded from the above, we still can get decay estimates for the time-dependent function $I_1(t)$. In such case, there exists a sequence $\{\ell_j\}_{j\in\mb{N}}\subset A_{\intt}^{n,s}$ such that $\ell_j\to\infty$ as $j\to\infty$. By the same way as the proof of Theorem \ref{Thm_Energy_Decay}, one finds
\begin{align*}
	I_1(t)\lesssim t^{-\ell_j}\int_0^{\varepsilon}r^{2s+n-1-2\ell_j}\mu(r)^{-\ell_j}\mathrm{d}r\lesssim t^{-\ell_j}
\end{align*}
for $t>1$. It leads polynomial type decay estimates for $I_1(t)$ with arbitrary-order $t^{-\ell_j}$ $(j=1,2,\dots)$. In fact, an instance is $\beta=-2$ in Example \ref{Examp_I}, which gives an exponential decay estimate.
\end{remark}

We now turn to the situation for large frequencies.
\begin{lemma}\label{Lem_Decay_Large}
Let us assume $\mu\in\ml{C}([0,\infty))$ with Hypothesis \ref{Hyp_Diss}. The following estimates hold:
\begin{align*}
	\left\|\chi_{\extt}(\xi)|\xi|^s\mathrm{e}^{-\frac{ct}{\mu(|\xi|)}}\hat{f}(\xi)\right\|_{L^2}\lesssim\begin{cases}
		(1+t)^{-\ell}\|f\|_{\dot{H}^s_{\mu,\ell}}&\mbox{if}\ \ \lim\limits_{r\to\infty}\mu(r)=\infty,\\
		\mathrm{e}^{-ct}\|f\|_{\dot{H}^s}&\mbox{if}\ \ \lim\limits_{r\to\infty}\mu(r)<\infty,
	\end{cases}
\end{align*}
with $c>0$, $s\in\mb{R}$ and $\ell\geqslant0$ for any $t>0$.
\end{lemma}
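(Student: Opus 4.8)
The plan is to reduce the asserted $L^2$ bound to an elementary pointwise estimate on the scalar multiplier $\mathrm{e}^{-ct/\mu(|\xi|)}$ over the support of $\chi_{\extt}$. Indeed, since on $\ml{Z}_{\extt}(N)$ the function $\mathrm{e}^{-ct/\mu(|\xi|)}$ depends on $\xi$ only through $|\xi|$, by Plancherel's identity we may write
\[
\left\|\chi_{\extt}(\xi)|\xi|^s\mathrm{e}^{-\frac{ct}{\mu(|\xi|)}}\hat{f}(\xi)\right\|_{L^2}^2=\int_{|\xi|\geqslant N}\chi_{\extt}(\xi)^2|\xi|^{2s}\mathrm{e}^{-\frac{2ct}{\mu(|\xi|)}}|\hat{f}(\xi)|^2\,\mathrm{d}\xi,
\]
so it suffices to dominate $\mathrm{e}^{-ct/\mu(|\xi|)}$ on $\ml{Z}_{\extt}(N)$ by a time-decay factor multiplied by an extra $\mu(|\xi|)$-weight (regularity-loss case) or simply by an exponentially decaying constant (regularity-no-loss case), and then pull the bound under the integral.

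For the regularity-loss case $\lim_{r\to\infty}\mu(r)=\infty$, I would first exploit $\mu(r)\to\infty$ to fix $N\gg1$ large enough that $\mu(|\xi|)\geqslant1$ on $\ml{Z}_{\extt}(N)$. Then, for $\xi\in\ml{Z}_{\extt}(N)$ and $t>0$, putting $u:=t/\mu(|\xi|)$, the inequality $1+t\leqslant\mu(|\xi|)+t=(1+u)\mu(|\xi|)$ (valid precisely because $\mu(|\xi|)\geqslant1$) combined with the elementary bound $C_\ell:=\sup_{u\geqslant0}(1+u)^{\ell}\mathrm{e}^{-cu}<\infty$ gives
\[
\mathrm{e}^{-\frac{ct}{\mu(|\xi|)}}=\mathrm{e}^{-cu}\leqslant C_\ell(1+u)^{-\ell}\leqslant C_\ell\,(1+t)^{-\ell}\mu(|\xi|)^{\ell}.
\]
Inserting this into the integral above yields $\|\chi_{\extt}(\xi)|\xi|^s\mathrm{e}^{-ct/\mu(|\xi|)}\hat f(\xi)\|_{L^2}\lesssim(1+t)^{-\ell}\|\,|\xi|^s\mu(|\xi|)^{\ell}\hat f\,\|_{L^2}=(1+t)^{-\ell}\|f\|_{\dot{H}^s_{\mu,\ell}}$, which is exactly the claim.

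For the regularity-no-loss case $\lim_{r\to\infty}\mu(r)<\infty$, I would instead use continuity of $\mu$ on $[0,\infty)$ together with finiteness of the limit to obtain a uniform bound $\mu(|\xi|)\leqslant M$ on $\ml{Z}_{\extt}(N)$. Then $\mathrm{e}^{-ct/\mu(|\xi|)}\leqslant\mathrm{e}^{-ct/M}$ on $\ml{Z}_{\extt}(N)$ (interpreting $\mathrm{e}^{-ct/0}$ as $0$ for $t>0$, consistently with the integrand), and after relabeling $c/M$ as $c$ one gets $\|\chi_{\extt}(\xi)|\xi|^s\mathrm{e}^{-ct/\mu(|\xi|)}\hat f(\xi)\|_{L^2}\leqslant\mathrm{e}^{-ct}\|\,|\xi|^s\hat f\,\|_{L^2}=\mathrm{e}^{-ct}\|f\|_{\dot{H}^s}$; note that here no lower bound on $\mu$ is needed.

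The argument is short, and the only genuinely delicate point is making the exponential-to-polynomial trade-off uniform in $|\xi|$: this is exactly where the lower bound $\mu(|\xi|)\geqslant1$ on $\ml{Z}_{\extt}(N)$ in the first case is essential (otherwise $1+t$ is only controlled by $1+u$, not by $(1+u)\mu(|\xi|)$, and the weight $\mu(|D|)^{\ell}$ cannot be produced), which is precisely why the threshold $\lim_{|\xi|\to\infty}\mu(|\xi|)=\infty$ decides whether one pays regularity through the space $\dot{H}^s_{\mu,\ell}$ or retains exponential decay. The one bookkeeping item to verify is that enlarging $N$ in the regularity-loss case does not interfere with the rest of the paper, i.e.\ that $N$ (and $\varepsilon$) can be fixed once and for all depending only on $\mu$.
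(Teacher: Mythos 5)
Your proposal is correct and follows essentially the same route as the paper: bound the scalar multiplier pointwise on $\ml{Z}_{\extt}(N)$ (using $\sup_{u\geqslant0}(1+u)^{\ell}\mathrm{e}^{-cu}<\infty$ in the regularity-loss case, a uniform upper bound on $\mu$ in the other) and then apply Plancherel, exactly as the paper does with $\sup_{|\xi|\geqslant N}\big(\mu(|\xi|)^{-\ell}\mathrm{e}^{-ct/\mu(|\xi|)}\big)$ and $\sup_{u\geqslant0}u^{\ell}\mathrm{e}^{-u}<\infty$. Your variant with the weight $(1+u)^{\ell}$ and the lower bound $\mu\geqslant1$ on the exterior zone is a harmless bookkeeping refinement that produces $(1+t)^{-\ell}$ directly rather than $t^{-\ell}$, and enlarging $N$ is indeed innocuous since $\lim_{r\to\infty}r\mu(r)=\infty$ guarantees $\mu>0$ there in any case.
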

\begin{remark}
There is no contradiction between Hypothesis \ref{Hyp_Diss} and the assumption $\lim\nolimits_{r\to\infty}\mu(r)=\infty$ (or $\lim\nolimits_{r\to\infty}\mu(r)<\infty$).
\end{remark}
\begin{remark}\label{Rem_2.5}
In Lemma \ref{Lem_Decay_Large}, provided that $\lim\nolimits_{r\to\infty}\mu(r)=\infty$, then we need further $\mu(|D|)^{\ell}$ regularity of $f=f(x)$ to obtain some decay estimates. However, this phenomenon disappears if $\lim\nolimits_{r\to\infty}\mu(r)<\infty$. Namely, we has derived a critical condition (or threshold) concerning the pseudo-differential operator $\mu(|D|)$ for regularity-loss decay properties, which is described by the symbol of $\mu(|D|)$ as follows:
\begin{align}\label{Condition_Regularity_Loss}
	\lim\limits_{|\xi|\to\infty}\mu(|\xi|)=\infty.
\end{align} 
To be specific, if the symbol of $\mu(|D|)$ satisfies the condition \eqref{Condition_Regularity_Loss}, then our desired polynomially decay estimates can be reached by assuming higher-regularities for $f(x)$. Otherwise, some exponential decay estimates without asking higher-regularities for $f(x)$ can be derived.
\end{remark}
\begin{exam}
Let us take into consideration of $\mu(|\xi|)=|\xi|^{\beta}$ again. According to Lemma \ref{Lem_Decay_Large}, we may immediately obtain
\begin{align*}
\left\|\chi_{\extt}(\xi)|\xi|^s\mathrm{e}^{-\frac{ct}{\mu(|\xi|)}}\hat{f}(\xi)\right\|_{L^2}\lesssim\begin{cases}
	(1+t)^{-\frac{\bar{\ell}}{\beta}}\|f\|_{\dot{H}^{s+\bar{\ell}}}&\mbox{if}\ \ \beta>0,\\
	\mathrm{e}^{-ct}\|f\|_{\dot{H}^s}&\mbox{if}\ \ \beta\leqslant0,
\end{cases}
\end{align*}
with $\bar{\ell}=\beta\ell$, $\ell\geqslant0$ and $s\geqslant0$ for any $t>0$. These estimates exactly coincide with the regularity-loss decay property in \cite[Lemma 2.4]{Ikehata-Iyota=2018}, and the exponential decay property in \cite[Proof of Theorem 1.1]{Ikehata-Natsume=2012}. In other words, for the fractional type differential operator $\mu(|D|)$, an application of Lemma \ref{Lem_Decay_Large} is really sharp.
\end{exam}
\begin{proof}[Proof of Lemma \ref{Lem_Decay_Large}]
For the situation $\lim\nolimits_{r\to\infty}\mu(r)<\infty$, it immediately gives
\begin{align*}
\left\|\chi_{\extt}(\xi)|\xi|^s\mathrm{e}^{-\frac{ct}{\mu(|\xi|)}}\hat{f}(\xi)\right\|_{L^2}&\lesssim\left(\sup\limits_{|\xi|\geqslant N\gg 1}\mathrm{e}^{-\frac{ct}{\mu(|\xi|)}}\right)\|\chi_{\extt}(\xi)|\xi|^s\hat{f}(\xi)\|_{L^2}\\
&\lesssim \mathrm{e}^{-ct}\|f\|_{\dot{H}^s},
\end{align*}
where we applied the Plancherel theorem. For another case $\lim\nolimits_{r\to\infty}\mu(r)=\infty$, it needs to be treated carefully by
\begin{align*}
\left\|\chi_{\extt}(\xi)|\xi|^s\mathrm{e}^{-\frac{ct}{\mu(|\xi|)}}\hat{f}(\xi)\right\|_{L^2}&\lesssim\sup\limits_{|\xi|\geqslant N\gg 1}\left(\mu(|\xi|)^{-\ell}\mathrm{e}^{-\frac{ct}{\mu(|\xi|)}}\right)\|\chi_{\extt}(\xi)\mu(|\xi|)^{\ell}|\xi|^s\hat{f}(\xi)\|_{L^2}\\
&\lesssim t^{-\ell}\sup\limits_{|\xi|\geqslant N\gg 1}\left( \left(\tfrac{ct}{\mu(|\xi|)}\right)^{\ell}\mathrm{e}^{-\frac{ct}{\mu(|\xi|)}} \right)\|\chi_{\extt}(D)\mu(|D|)^{\ell}|D|^sf\|_{L^2}\\
&\lesssim (1+t)^{-\ell}\|\mu(|D|)^{\ell}f\|_{\dot{H}^s}.
\end{align*}
Our proof is completed.
\end{proof}
\subsection{Decay estimates of solutions}\label{Section_Decay_Estimates}
At the beginning, let us state the first result on decay estimates for energy terms $|D|u(t,\cdot)$ and $u_t(t,\cdot)$ in the $\dot{H}^s$ norm with $s\geqslant0$.
\begin{theorem}\label{Thm_Energy_Decay}
Let us assume that Hypothesis \ref{Hyp_Diss} holds. Then, the solution to the Cauchy problem \eqref{Eq_Linear_General_Damped_Waves} fulfills the following decay estimates:
\begin{align*}
\|\,|D|u(t,\cdot)\|_{\dot{H}^{s}}+\|u_t(t,\cdot)\|_{\dot{H}^s}\lesssim
\begin{cases}
	(1+t)^{-\frac{1}{2}\min\left\{\alpha_{n,s+1}^m-2\delta,2\ell_0 \right\}}\|u_0\|_{\dot{H}^{s+1}_{\mu,\ell_0}\cap L^1}&\\
	\quad+(1+t)^{-\frac{1}{2}\min\left\{\alpha_{n,s}^m-2\delta,2\ell_1 \right\}}\|u_1\|_{\dot{H}^{s}_{\mu,\ell_1}\cap L^1}&\mbox{if}\ \ 	\lim\limits_{r\to\infty}\mu(r)=\infty,\\
	(1+t)^{-\frac{1}{2}\alpha_{n,s+1}^m+\delta}\|u_0\|_{\dot{H}^{s+1}\cap L^1}&\\
	\quad+(1+t)^{-\frac{1}{2}\alpha_{n,s}^m+\delta}\|u_1\|_{\dot{H}^{s}\cap L^1}&\mbox{if}\ \ \lim\limits_{r\to\infty}\mu(r)<\infty, 
\end{cases}
\end{align*}
with $s\geqslant0$ and $\ell_0,\ell_1\geqslant0$ for any $\delta>0$, where the numbers $\alpha_{n,s+k}^m-2\delta\in A_{\intt}^{n,s+k}$ for $k=0,1$.
\end{theorem}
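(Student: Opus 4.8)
The plan is to combine the pointwise Fourier-space bound of Proposition~\ref{Prop_Pointwise} with a partition of $\mb{R}^n$ into the small-, bounded- and large-frequency zones carried by $\chi_{\intt},\chi_{\bdd},\chi_{\extt}$, and to feed each piece into Lemma~\ref{Lem_Decay_Small} or Lemma~\ref{Lem_Decay_Large} according to the behavior of $\rho(|\xi|)$ recorded in \eqref{Behavior_of_Key_Function}. Concretely, Proposition~\ref{Prop_Pointwise} gives
\[
|\hat u_t(t,\xi)|^2+|\xi|^2|\hat u(t,\xi)|^2\lesssim\mathrm{e}^{-c\rho(|\xi|)t}\big(|\xi|^2|\hat u_0(\xi)|^2+|\hat u_1(\xi)|^2\big);
\]
multiplying by $|\xi|^{2s}$ and taking square roots, each of $|\xi|^{s}|\hat u_t(t,\xi)|$ and $|\xi|^{s+1}|\hat u(t,\xi)|$ is $\lesssim \mathrm{e}^{-\frac{c}{2}\rho(|\xi|)t}\big(|\xi|^{s+1}|\hat u_0(\xi)|+|\xi|^{s}|\hat u_1(\xi)|\big)$. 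By the Plancherel theorem it therefore suffices to estimate, for the $u_0$-part, the $L^2$-norm of $\chi_\bullet(\xi)|\xi|^{s+1}\mathrm{e}^{-\frac{c}{2}\rho(|\xi|)t}\hat u_0(\xi)$ in each zone and, for the $u_1$-part, that of $\chi_\bullet(\xi)|\xi|^{s}\mathrm{e}^{-\frac{c}{2}\rho(|\xi|)t}\hat u_1(\xi)$; this is precisely the source of the one-order gap in regularity between the $u_0$- and $u_1$-contributions in the statement.

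On the small-frequency zone I would use $\rho(|\xi|)\approx|\xi|^2\mu(|\xi|)$ together with $|\hat u_j(\xi)|\le\|u_j\|_{L^1}$, then apply Lemma~\ref{Lem_Decay_Small} with power $s+1$ (allowed since $s+1>-n/2$) for $u_0$ and with power $s$ (since $s\geqslant0>-n/2$) for $u_1$, obtaining $(1+t)^{-\frac12\alpha_{n,s+1}^m+\delta}\|u_0\|_{L^1}$ and $(1+t)^{-\frac12\alpha_{n,s}^m+\delta}\|u_1\|_{L^1}$. On the bounded-frequency zone $\rho(|\xi|)\geqslant c>0$ by \eqref{Behavior_of_Key_Function}, giving an $\mathrm{e}^{-ct}$ factor; since $\chi_{\bdd}$ is supported in a set of finite measure and $\|\hat u_j\|_{L^\infty}\lesssim\|u_j\|_{L^1}$, the remaining $L^2$-norm costs only $\|u_j\|_{L^1}$, so this zone is exponentially small. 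On the large-frequency zone I would use $\rho(|\xi|)\approx\mu(|\xi|)^{-1}$ and apply Lemma~\ref{Lem_Decay_Large} directly, with power $s+1$ for $u_0$ and $s$ for $u_1$; here the dichotomy enters, producing $(1+t)^{-\ell_0}\|u_0\|_{\dot{H}^{s+1}_{\mu,\ell_0}}+(1+t)^{-\ell_1}\|u_1\|_{\dot{H}^{s}_{\mu,\ell_1}}$ when $\lim_{r\to\infty}\mu(r)=\infty$ and the exponentially decaying $\mathrm{e}^{-ct}\big(\|u_0\|_{\dot{H}^{s+1}}+\|u_1\|_{\dot{H}^{s}}\big)$ when $\lim_{r\to\infty}\mu(r)<\infty$.

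Finally I would add the three contributions. The bounded-zone term, and in the regular case also the large-zone term, decay exponentially and are absorbed into the slower polynomial rates; combining the interior rate $(1+t)^{-\frac12\alpha_{n,s+1}^m+\delta}$ with the exterior rate $(1+t)^{-\ell_0}$ for $u_0$, and $(1+t)^{-\frac12\alpha_{n,s}^m+\delta}$ with $(1+t)^{-\ell_1}$ for $u_1$, yields exactly $(1+t)^{-\frac12\min\{\alpha_{n,s+1}^m-2\delta,\,2\ell_0\}}$ and $(1+t)^{-\frac12\min\{\alpha_{n,s}^m-2\delta,\,2\ell_1\}}$, with $u_0$ measured in $\dot{H}^{s+1}_{\mu,\ell_0}\cap L^1$ and $u_1$ in $\dot{H}^{s}_{\mu,\ell_1}\cap L^1$. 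Given Proposition~\ref{Prop_Pointwise} and Lemmas~\ref{Lem_Decay_Small}--\ref{Lem_Decay_Large}, I expect no deep difficulty; the points that need genuine care are propagating the extra $|\xi|$-weight so that the $u_0$-part lands in the $\dot{H}^{s+1}$-scale spaces, verifying that the bounded zone only costs an $L^1$-norm (finite measure of the annulus, so that no lower bound on $\mu$ there is needed for the remainder estimate), and cleanly merging the several decay rates through the $\min$ — while the small-time range $t\leqslant1$ is trivial since all the multipliers involved are uniformly bounded.
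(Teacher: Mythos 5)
Your proposal is correct and follows essentially the same route as the paper's proof: the same three-zone decomposition, Proposition \ref{Prop_Pointwise} combined with the behavior \eqref{Behavior_of_Key_Function} of $\rho$, Lemma \ref{Lem_Decay_Small} with the Hausdorff--Young bound $\|\hat u_j\|_{L^\infty}\lesssim\|u_j\|_{L^1}$ for small frequencies, Lemma \ref{Lem_Decay_Large} for large frequencies, and an exponential bound on the bounded zone. The only difference is that you spell out the bounded-zone estimate (finite measure of the annulus plus the $L^1$ control) slightly more explicitly than the paper does, which is harmless.
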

\begin{remark}Similarly to the discussion in Remark \ref{Rem_2.5}, in the view of Theorem \ref{Thm_Energy_Decay} we discovered a new threshold for regularity-loss decay properties. Precisely, this threshold to the damped wave model \eqref{Eq_Linear_General_Damped_Waves} is described by the symbol for $\mu(|D|)$ of the general damping term, i.e.
	\begin{align*}
	\lim\limits_{|\xi|\to\infty}\mu(|\xi|)\begin{cases}
		<\infty\ :&\mbox{decay properties without regularity-loss},\\
		=\infty\ :&\mbox{decay properties with } \ell_0,\ell_1\mbox{-order of regularity-loss}.
	\end{cases}
	\end{align*}
This is one of novelties of our paper.
\end{remark}
\begin{remark}
In the recent mathematical literature (see, for example, \cite{D'Abbicco-Ebert=2014,Ikehata-Iyota=2018}) the major concern is the initial value problem
\begin{align*}
u_{tt}-\Delta u+|D|^{\theta}u_t=0 \ \ \mbox{with}\ \ u(0,x)=u_0(x),\ u_t(0,x)=u_1(x),
\end{align*}
where  $\theta\geqslant0$. Then, the authors in the previous researches would like to find a threshold $\theta_{\mathrm{thres}}=2$ in the scale $\{|D|^{\theta}\}_{\theta\geqslant0}$ of the damping term between regularity-loss decay properties and exponential stabilities (for large frequencies). Nevertheless, the scale $\{|D|^{\theta}\}_{\theta\geqslant0}$ is too rough to verify the critical damping. Motivated by the innovative works \cite{Ebert-Girardi-Reissig=2020,Tao=2009}, in Theorem \ref{Thm_Energy_Decay} we described the threshold by the symbol of $\mu(|D|)$ in the damping term $-\mu(|D|)\Delta u_t$ or $\mu(|D|)|D|^2u_t$ such that $\lim\nolimits_{|\xi|\to\infty}\mu(|\xi|)=\infty$. It means that even we choose the damping term
\begin{align*}
	\underbrace{\log\big(1+\log\big(1+\cdots\log\big(\log}_{k\ times \ \log}(1+|D|)\big)\big)\big)|D|^2u_t,
\end{align*}
with any $k\in\mb{N}$, as the damping term in the damped wave model \eqref{Eq_Linear_General_Damped_Waves}, we still can observe regularity-loss decay properties of solutions.
\end{remark}
\begin{remark}
Concerning the case $\lim\nolimits_{r\to\infty}\mu(r)=\infty$, due to the phenomenon of regularity-loss, the optimal choices of the losses are given by $2\ell_0=\alpha_{n,s+1}^m-2\delta$ and $2\ell_1=\alpha_{n,s}^m-2\delta$ for any $\delta>0$.
\end{remark}
\begin{exam}
	Let us consider the fractional type operator $\mu(|D|)=|D|^{\sigma}$ with $\sigma\in(-1,\infty)$ fulfilling Hypothesis \ref{Hyp_Diss}, namely, the (very strong) structurally damped waves
	\begin{align*}
		u_{tt}-\Delta u+(-\Delta)^{1+\frac{\sigma}{2}}u_t=0 \ \ \mbox{with}\ \ u(0,x)=u_0(x),\ u_t(0,x)=u_1(x).
	\end{align*}
By using Theorem \ref{Thm_Energy_Decay}, because of $A_{\intt}^{n,s+k}=\left\{0\leqslant\alpha_{n,s+k}<\frac{2s+2k+n}{2+\sigma} \right\}$ for $k=0,1$, we arrive at
\begin{align*}
&\|\,|D|u(t,\cdot)\|_{\dot{H}^s}+\|u_t(t,\cdot)\|_{\dot{H}^s}\lesssim\begin{cases}
	(1+t)^{-\min\left\{\frac{2s+2+n}{2(2+\sigma)}-\delta,\frac{\ell_0}{\sigma} \right\}}\|u_0\|_{\dot{H}^{s+1+\ell_0}\cap L^1}&\\
	\quad	+(1+t)^{-\min\left\{\frac{2s+n}{2(2+\sigma)}-\delta,\frac{\ell_1}{\sigma} \right\}}\|u_1\|_{\dot{H}^{s+\ell_1}\cap L^1}&\mbox{if}\ \ \sigma\in(0,\infty),\\
	(1+t)^{-\frac{2s+2+n}{2(2+\sigma)}+\delta}\|u_0\|_{\dot{H}^{s+1}\cap L^1}\\
	\quad+	(1+t)^{-\frac{2s+n}{2(2+\sigma)}+\delta}\|u_1\|_{\dot{H}^{s}\cap L^1}&\mbox{if}\ \ \sigma\in(-1,0],
\end{cases}
\end{align*}
with sufficiently small constant $\delta>0$ for $s\geqslant0$ and $\ell_0,\ell_1\geqslant0$. Our previous estimates almost coincide (in the sense of arbitrarily small loss $\delta>0$) with those in \cite[Theorem 1.1]{Ikehata-Iyota=2018} if $\sigma\in(0,\infty)$, and \cite[Theorem 2.1]{Charao-daLuz-Ikehata=2013} if $\sigma\in(-1,0]$.
\end{exam}
\begin{proof}[Proof of Theorem \ref{Thm_Energy_Decay}]
Let us divide our discussion into three parts. For $\xi\in\ml{Z}_{\intt}(\varepsilon)$, because our assumption $\lim\nolimits_{r\downarrow0}r\mu(r)=0$, the key function behaviors
\begin{align*}
\rho(r)=\frac{r^2\mu(r)}{1+r^2\mu(r)^2}\approx r^2\mu(r)\ \ \mbox{as}\ \ r\leqslant\varepsilon \ll 1.
\end{align*}
Therefore, the Plancherel theorem associated with Lemma \ref{Lem_Decay_Small} and Proposition \ref{Prop_Pointwise} implies
\begin{align*}
&\|\chi_{\intt}(D)|D|^{s+1}u(t,\cdot)\|_{L^2}+\|\chi_{\intt}(D)|D|^su_t(t,\cdot)\|_{L^2}\\
&\qquad=\|\chi_{\intt}(\xi)|\xi|^{s+1}\hat{u}(t,\xi)\|_{L^2}+\|\chi_{\intt}(\xi)|\xi|^s\hat{u}_t(t,\xi)\|_{L^2}\\
&\qquad\lesssim\left\|\chi_{\intt}(\xi)|\xi|^s\mathrm{e}^{-c|\xi|^2\mu(|\xi|)t}\left(|\xi|\hat{u}_0(\xi)+\hat{u}_1(\xi)\right)\right\|_{L^2}\\
&\qquad\lesssim\left\|\chi_{\intt}(\xi)|\xi|^{s+1}\mathrm{e}^{-c|\xi|^2\mu(|\xi|)t}\right\|_{L^2}\|\hat{u}_0\|_{L^{\infty}}+\left\|\chi_{\intt}(\xi)|\xi|^{s}\mathrm{e}^{-c|\xi|^2\mu(|\xi|)t}\right\|_{L^2}\|\hat{u}_1\|_{L^{\infty}}\\
&\qquad\lesssim (1+t)^{-\frac{1}{2}\alpha_{n,s+1}^m+\delta}\|u_0\|_{L^1}+(1+t)^{-\frac{1}{2}\alpha_{n,s}^m+\delta}\|u_1\|_{L^1},
\end{align*}
where the Hausdorff-Young inequality was used in the last chain.

When $\xi\in\ml{Z}_{\extt}(N)$, thanks to $\lim\nolimits_{r\to\infty}r\mu(r)=\infty$ we may claim that
\begin{align*}
	\rho(r)=\frac{r^2\mu(r)}{1+r^2\mu(r)^2}\approx\frac{1}{\mu(r)}\ \ \mbox{as}\ \ r\geqslant N\gg1,
\end{align*}
which leads to
\begin{align*}
&\|\chi_{\extt}(D)|D|^{s+1}u(t,\cdot)\|_{L^2}+\|\chi_{\extt}(D)|D|^su_t(t,\cdot)\|_{L^2}\\
&\qquad\lesssim\left\|\chi_{\extt}(\xi)|\xi|^s\mathrm{e}^{-\frac{ct}{\mu(|\xi|)}}\left(|\xi|\hat{u}_0(\xi)+\hat{u}_1(\xi)\right)\right\|_{L^2}\\
&\qquad\lesssim\begin{cases}
(1+t)^{-\ell_0}\|\mu(|D|)^{\ell_0}u_0\|_{\dot{H}^{s+1}}+(1+t)^{-\ell_1}\|\mu(|D|)^{\ell_1}u_1\|_{\dot{H}^{s}}&\mbox{if}\ \ \lim\limits_{r\to\infty}\mu(r)=\infty,\\
\mathrm{e}^{-ct}\left(\|u_0\|_{\dot{H}^{s+1}}+\|u_1\|_{\dot{H}^s}\right)&\mbox{if}\ \ \lim\limits_{r\to\infty}\mu(r)<\infty.
\end{cases}
\end{align*}
In the above, we employed Lemma \ref{Lem_Decay_Large} by choosing $\ell=\ell_0$ and $\ell=\ell_1$, respectively.

To end this proof, we realize an exponential decay estimate for $\xi\in\ml{Z}_{\bdd}(\varepsilon,N)$ since $\rho(r)\approx c>0$ in the case $\varepsilon\leqslant r\leqslant N$. Combining all derived estimates in the last discussion, we complete our desired estimate.
\end{proof}

In the next result, we estimate the solution itself in the $L^2$ norm for higher-dimensions by employing similar idea to the one in Theorem \ref{Thm_Energy_Decay}.
\begin{coro}\label{Coro_Solution_high}
Let us assume that Hypothesis \ref{Hyp_Diss} holds. Then, the solution to the Cauchy problem \eqref{Eq_Linear_General_Damped_Waves} for $n\geqslant 3$ fulfills the following decay estimates:
\begin{align*}
	\|u(t,\cdot)\|_{L^2}\lesssim
	\begin{cases}
		(1+t)^{-\frac{1}{2}\min\left\{\alpha_{n,0}^m-2\delta,2\ell_0 \right\}}\|u_0\|_{\dot{H}^{0}_{\mu,\ell_0}\cap L^1}&\\
		\quad+(1+t)^{-\frac{1}{2}\min\left\{\alpha_{n,-1}^m-2\delta,2\ell_1 \right\}}\|u_1\|_{\dot{H}^{-1}_{\mu,\ell_1}\cap L^1}&\mbox{if}\ \ 	\lim\limits_{r\to\infty}\mu(r)=\infty,\\
		(1+t)^{-\frac{1}{2}\alpha_{n,0}^m+\delta}\|u_0\|_{L^2\cap L^1}&\\
		\quad+(1+t)^{-\frac{1}{2}\alpha_{n,-1}^m+\delta}\|u_1\|_{L^2\cap L^1}&\mbox{if}\ \ \lim\limits_{r\to\infty}\mu(r)<\infty, 
	\end{cases}
\end{align*}
with $\ell_0,\ell_1\geqslant0$ for any $\delta>0$, where the numbers $\alpha_{n,k-1}^m-2\delta\in A_{\intt}^{n,k-1}$ for $k=0,1$.
\end{coro}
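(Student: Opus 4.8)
The strategy is to mimic the three–zone decomposition used in the proof of Theorem \ref{Thm_Energy_Decay}, but now applied to the solution $\hat u$ itself (i.e. with the factor $|\xi|^s$ for $s=0$) instead of the energy $|\xi|\hat u$, $\hat u_t$. The only genuinely new point compared with Theorem \ref{Thm_Energy_Decay} is that in the small-frequency zone the quantity $\|u(t,\cdot)\|_{L^2}$ forces us to work with the \emph{negative-order} exponent $s=-1$ coming from $|\xi|^2|\hat u|^2\gtrsim $ (energy), and this is exactly where the dimensional restriction $n\geqslant 3$ enters: the change-of-variables integral $\int_0^\varepsilon r^{2(-1)+n-1}\,\mathrm{d}r=\int_0^\varepsilon r^{n-3}\,\mathrm{d}r$ must converge, which needs $n-3>-1$, i.e. $n\geqslant 3$; equivalently the Hausdorff–Young/$L^1$ bound for $\hat u_1$ requires $\alpha_{n,-1}^m>0$, which is precisely $-1>-n/2$.

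\medskip
\noindent\textbf{Step 1 (small frequencies).} For $\xi\in\ml{Z}_{\intt}(\varepsilon)$ I start from Proposition \ref{Prop_Pointwise}, which gives
\begin{align*}
|\xi|^2|\hat u(t,\xi)|^2\lesssim \mathrm{e}^{-c\rho(|\xi|)t}\bigl(|\xi|^2|\hat u_0(\xi)|^2+|\hat u_1(\xi)|^2\bigr),
\end{align*}
hence $|\hat u(t,\xi)|\lesssim \mathrm{e}^{-\frac{c}{2}\rho(|\xi|)t}\bigl(|\hat u_0(\xi)|+|\xi|^{-1}|\hat u_1(\xi)|\bigr)$. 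Since $\lim_{r\downarrow0}r\mu(r)=0$ we have $\rho(|\xi|)\approx|\xi|^2\mu(|\xi|)$ on $\ml{Z}_{\intt}(\varepsilon)$, so after Plancherel I bound $\|\chi_{\intt}(D)u(t,\cdot)\|_{L^2}$ by
\begin{align*}
\bigl\|\chi_{\intt}(\xi)\mathrm{e}^{-c|\xi|^2\mu(|\xi|)t}\bigr\|_{L^2}\|\hat u_0\|_{L^\infty}+\bigl\|\chi_{\intt}(\xi)|\xi|^{-1}\mathrm{e}^{-c|\xi|^2\mu(|\xi|)t}\bigr\|_{L^2}\|\hat u_1\|_{L^\infty},
\end{align*}
and then apply Lemma \ref{Lem_Decay_Small} with $s=0$ and $s=-1$ respectively (legitimate because $0>-n/2$ and, for $n\geqslant3$, $-1>-n/2$), together with the Hausdorff–Young inequality $\|\hat u_j\|_{L^\infty}\lesssim\|u_j\|_{L^1}$. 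This produces the rates $(1+t)^{-\frac12\alpha_{n,0}^m+\delta}$ and $(1+t)^{-\frac12\alpha_{n,-1}^m+\delta}$.

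\medskip
\noindent\textbf{Step 2 (large frequencies) and Step 3 (bounded frequencies).} For $\xi\in\ml{Z}_{\extt}(N)$, $\lim_{r\to\infty}r\mu(r)=\infty$ gives $\rho(|\xi|)\approx\mu(|\xi|)^{-1}$, so from Proposition \ref{Prop_Pointwise} I get $|\hat u(t,\xi)|\lesssim\mathrm{e}^{-ct/\mu(|\xi|)}\bigl(|\hat u_0(\xi)|+|\xi|^{-1}|\hat u_1(\xi)|\bigr)$; applying Lemma \ref{Lem_Decay_Large} with $s=0$ to $u_0$ and with $s=-1$ to $u_1$ yields the two cases $(1+t)^{-\ell_j}\|\cdot\|_{\dot H^{\,\cdot}_{\mu,\ell_j}}$ when $\mu(r)\to\infty$, and the exponential term $\mathrm{e}^{-ct}(\|u_0\|_{L^2}+\|u_1\|_{\dot H^{-1}})$ otherwise. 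On the middle zone $\ml{Z}_{\bdd}(\varepsilon,N)$, $\rho\approx c>0$ by continuity and positivity of $\mu$ there (together with $|\xi|\approx1$), so Proposition \ref{Prop_Pointwise} directly gives an exponential decay estimate controlled by $\|u_0\|_{L^2}+\|u_1\|_{L^2}$. Summing the three zones and absorbing $\dot H^{-1}$, $L^2$ and exponential contributions into the stated norms $\|u_j\|_{\dot H^{j-1}_{\mu,\ell_j}\cap L^1}$ (resp. $L^2\cap L^1$) finishes the proof.

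\medskip
\noindent\textbf{Main obstacle.} There is no deep difficulty here beyond Theorem \ref{Thm_Energy_Decay}; the one point requiring care is the low-frequency treatment of the $u_1$-term, where the loss of one power of $|\xi|$ (from inverting $|\xi|^2|\hat u|^2$) makes the relevant integral $\int_0^\varepsilon r^{n-3}\,\mathrm{d}r$ and hence is the reason the corollary is restricted to $n\geqslant3$; one must check that $\alpha_{n,-1}^m>0$ under Hypothesis \ref{Hyp_Diss} so that Lemma \ref{Lem_Decay_Small} is applicable, which is exactly the condition $-1>-n/2$.
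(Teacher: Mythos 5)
Your proposal is correct and follows essentially the same route as the paper: the paper likewise derives $|\hat{u}(t,\xi)|\lesssim\mathrm{e}^{-c\rho(|\xi|)t}\bigl(|\hat{u}_0(\xi)|+|\xi|^{-1}|\hat{u}_1(\xi)|\bigr)$ from Proposition \ref{Prop_Pointwise} and then repeats the three-zone argument of Theorem \ref{Thm_Energy_Decay} with $s=0$ for $u_0$ and $s=-1$ for $u_1$, noting that the singularity $1/|\xi|$ as $|\xi|\downarrow0$ is exactly what forces $n\geqslant3$ in Lemma \ref{Lem_Decay_Small}. Your identification of the dimensional restriction with the condition $-1>-n/2$ matches the paper's reasoning precisely.
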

\begin{remark}
In the viewpoint of the solution itself estimate, even when $\lim\nolimits_{r\to\infty}\mu(r)=\infty$ the phenomenon of regularity-loss can be dropped in some special situations. In a nutshell, let us assume $u_0\equiv0$. From the inequality
\begin{align*}
	\|\chi_{\extt}(D)u_1\|_{\dot{H}^{-1}_{\mu,\ell_1}}=\left\|\chi_{\extt}(\xi)\mu(|\xi|)^{\ell_1}|\xi|^{-1}\hat{u}_1(\xi)\right\|_{L^2}\lesssim\|u_1\|_{L^2}
\end{align*}
if the function $\mu(r)\lesssim r^{1/\ell_1}$ holds for any $r\geqslant N\gg 1$, we claim no regularity-loss for the solution itself estimate now, e.g. $\mu(|D|)=\log(1+|D|)$.
\end{remark}
\begin{proof}[Proof of Corollary \ref{Coro_Solution_high}]
Since the proof is strictly following the one for Theorem \ref{Thm_Energy_Decay}, we just sketch the different part. From Proposition \ref{Prop_Pointwise}, it leads to
\begin{align}\label{Coro_Est}
	|\hat{u}(t,\xi)|\lesssim\mathrm{e}^{-c\rho(|\xi|)t}\left(|\hat{u}_0(\xi)|+\frac{1}{|\xi|}|\hat{u}_1(\xi)|\right).
\end{align}
Obviously, we notice a singularity $1/|\xi|$ for $|\xi|\downarrow0$. For this reason, we need to restrict $n\geqslant3$ to avoid it when we use Lemma \ref{Lem_Decay_Small}.
\end{proof}

Ending this section, let us introduce another assumption that does not be included in Hypothesis \ref{Hyp_Diss}. The next one can be the supplement of Hypothesis \ref{Hyp_Diss}.
\begin{hyp}\label{Hyp_Diss_C}
	Let us suppose that $\mu=\mu(r)$ is a non-negative function such that $\mu\in\ml{C}([0,\infty))$. Moreover, it satisfies
	\begin{align*}
		\lim\limits_{r\downarrow0}r\mu(r)=0\ \ \mbox{as well as}\ \ \lim\limits_{r\to\infty}r\mu(r)\geqslant c\geqslant0.
	\end{align*}
\end{hyp}
\noindent This assumption covers another situation for $\mu(r)$. According to Hypothesis \ref{Hyp_Diss_C}, the key function \eqref{Key_function} has another behavior for large frequencies as follows:
\begin{align*}
\rho(|\xi|)\approx |\xi|\ \ \mbox{for}\ \ \xi\in\ml{Z}_{\extt}(N)
\end{align*}
with $N\gg1$. Thus, we may deduce
\begin{align*}
\left\|\chi_{\extt}(\xi)|\xi|^s\mathrm{e}^{-c|\xi|t}\left(|\xi|\hat{u}_0(\xi)+\hat{u}_1(\xi)\right)\right\|_{L^2}\lesssim\mathrm{e}^{-ct}\left(\|u_0\|_{\dot{H}^{s+1}}+\|u_1\|_{\dot{H}^{s}}\right),
\end{align*}
which leads to the same results as the case $\lim\nolimits_{r\to\infty}\mu(r)<\infty$ in Theorems \ref{Thm_Energy_Decay} and \ref{Thm_Solution_Itself}. Under Hypothesis \ref{Hyp_Diss_C}, we also can obtain analytic smoothing phenomenon if $\lim\nolimits_{r\to\infty}r\mu(r)\geqslant c>0$.  A typical instance is $\mu(|D|)=|D|^{-2}\log(1+|D|^{2\sigma})$ with $\sigma\in(1/2,\infty)$, in other words,
\begin{align*}
u_{tt}-\Delta u+\log(1+|D|^{2\sigma})u_t=0 \ \ \mbox{with}\ \ u(0,x)=u_0(x),\ u_t(0,x)=u_1(x).
\end{align*}
Since $\mu(r)=r^{-2}\log(1+r^{2\sigma})$ with $\sigma\in(1/2,\infty)$ accommodate to Hypothesis \ref{Hyp_Diss_C} carrying the set
\begin{align*}
A_{\intt}^{n,s}&=\left\{\alpha_{n,s}\geqslant0:\int_0^{\varepsilon}r^{2s+n-1}\left(\log(1+r^{2\sigma})\right)^{-\alpha_{n,s}}\mathrm{d}r<\infty  \right\}\\
&=\left\{0\leqslant \alpha_{n,s}<\frac{2s+n}{2\sigma} \right\}.
\end{align*}
Then, we can derive
\begin{align*}
\|\,|D|u(t,\cdot)\|_{\dot{H}^s}+\|u_t(t,\cdot)\|_{\dot{H}^s}\lesssim (1+t)^{-\frac{2s+n+2}{4\sigma}+\delta}\|u_0\|_{\dot{H}^{s+1}\cap L^1}+(1+t)^{-\frac{2s+n}{4\sigma}+\delta}\|u_1\|_{\dot{H}^s\cap L^1}
\end{align*}
with a sufficiently small constant $\delta>0$ for $s\geqslant0$. The last estimate corresponds to the one in \cite[Theorem 3.1]{Charao-DAbbicco-Ikehata}.

\section{Asymptotic profiles with additionally $L^1$ initial data}\label{Section_Asym_Prof}
Throughout this section, we will make use of asymptotic representations of solution to derive the sharp estimates and asymptotic profiles of the solution under Hypothesis \ref{Hyp_Diss}. Due to the explicit computations as well as the refined Fourier analysis, some properties of the solution itself will be improved.
\subsection{Asymptotic behaviors of solution in the Fourier space}
Recalling the $|\xi|$-dependent differential equation in \eqref{Eq_Fourier_Linear}, the corresponding characteristic equation is provide by
\begin{align*}
\lambda^2+\mu(|\xi|)|\xi|^2\lambda+|\xi|^2=0,
\end{align*}
whose roots $\lambda_{\pm}=\lambda_{\pm}(|\xi|)$ can be expressed by
\begin{align*}
\lambda_{\pm}(|\xi|)=-\frac{\mu(|\xi|)|\xi|^2}{2}\pm\frac{|\xi|}{2}\sqrt{\mu(|\xi|)^2|\xi|^2-4}.
\end{align*}
The pairwise distinct characteristic roots in the above allow us to represent the solution to \eqref{Eq_Fourier_Linear} in the next form:
\begin{align}\label{Representation}
\hat{u}(t,\xi)=\underbrace{\frac{\lambda_+(|\xi|)\mathrm{e}^{\lambda_-(|\xi|)t}-\lambda_-(|\xi|)\mathrm{e}^{\lambda_+(|\xi|)t}}{\lambda_+(|\xi|)-\lambda_-(|\xi|)}}_{=:\widehat{K}_0(t,|\xi|)}\hat{u}_0(\xi)+\underbrace{\frac{\mathrm{e}^{\lambda_+(|\xi|)t}-\mathrm{e}^{\lambda_-(|\xi|)t}}{\lambda_+(|\xi|)-\lambda_-(|\xi|)}}_{=:\widehat{K}_1(t,|\xi|)}\hat{u}_1(\xi).
\end{align}
In the next parts, we will employ WKB analysis to explore some asymptotic behaviors (or estimates) of these kernels in different local phase spaces.

\medskip
\noindent\underline{Estimates for bounded frequencies:} Our goal for $\xi\in\ml{Z}_{\bdd}(\varepsilon,N)$ is to deduce an exponential decay estimate for regular  data. Different from the previous studies, e.g. \cite[Section 2.3]{Jachmann-Reissig=2009} or \cite[Section 4]{Reissig=2016}, we can make use of the result from energy estimates rather than a contradiction argument associated with compactness of the frequency zones.  To be specific, we may observe from Proposition \ref{Prop_Pointwise} that
\begin{align*}
\chi_{\bdd}(\xi)|\hat{u}(t,\xi)|&\lesssim\chi_{\bdd}(\xi)\frac{1}{|\xi|}\mathrm{e}^{-c\rho(|\xi|)t}\left(|\xi|\,|\hat{u}_0(\xi)|+|\hat{u}_1(\xi)|\right)\\
&\lesssim\chi_{\bdd}(\xi)\mathrm{e}^{-ct}\left(|\hat{u}_0(\xi)|+|\hat{u}_1(\xi)|\right),
\end{align*}
with $c>0$. Hence, in remaining discussions, it is necessary to derive the asymptotic profiles for $\xi\in\ml{Z}_{\intt}(\varepsilon)\cup\ml{Z}_{\extt}(N)$ with $\varepsilon\ll 1$ as well as $N\gg1$, separately.

\medskip
\noindent\underline{Asymptotic behaviors for large frequencies:} Concerning $|\xi|\geqslant N\gg1$, the characteristic roots can be rewritten and expanded by
\begin{align*}
\lambda_{\pm}(|\xi|)&=-\frac{\mu(|\xi|)|\xi|^2}{2}\pm\frac{\mu(|\xi|)|\xi|^2}{2}\sqrt{1-\frac{4}{\mu(|\xi|)^2|\xi|^2}}\\
&=-\frac{\mu(|\xi|)|\xi|^2}{2}\pm\frac{\mu(|\xi|)|\xi|^2}{2}\left(1-\frac{2}{\mu(|\xi|)^2|\xi|^2}+\ml{O}\left(\mu(|\xi|)^{-4}|\xi|^{-4}\right)\right),
\end{align*}
where we used $\lim\nolimits_{|\xi|\to\infty}(\mu(|\xi|)|\xi|)^{-1}=0$ in Hypothesis \ref{Hyp_Diss}. That is to say
\begin{align*}
\lambda_+(|\xi|)=-\frac{1}{\mu(|\xi|)}+\ml{O}\left(\mu(|\xi|)^{-3}|\xi|^{-2}\right)\ \ \mbox{and}\ \ \lambda_-(|\xi|)=-\mu(|\xi|)|\xi|^2+\ml{O}\left(\mu(|\xi|)^{-1}\right).
\end{align*}
By plugging the last expansions into the representation \eqref{Representation}, the kernels in the Fourier space are
\begin{align*}
\widehat{K}_0(t,|\xi|)&=\frac{\left(-\frac{1}{\mu(|\xi|)}+\ml{O}\left(\mu(|\xi|)^{-3}|\xi|^{-2}\right) \right)\exp\big(\left(-\mu(|\xi|)|\xi|^2+\ml{O}(\mu(|\xi|)^{-1})\right)t\big)}{\mu(|\xi|)|\xi|^2+\ml{O}(\mu(|\xi|)^{-1})}\\
&\quad-\frac{\left(-\mu(|\xi|)|\xi|^2+\ml{O}(\mu(|\xi|)^{-1})\right)\exp\left(\left(-\frac{1}{\mu(|\xi|)}+\ml{O}\left(\mu(|\xi|)^{-3}|\xi|^{-2}\right) \right)t\right)}{\mu(|\xi|)|\xi|^2+\ml{O}(\mu(|\xi|)^{-1})}
\end{align*}
and
\begin{align*}
\widehat{K}_1(t,|\xi|)=\frac{\exp\left(\left(-\frac{1}{\mu(|\xi|)}+\ml{O}\left(\mu(|\xi|)^{-3}|\xi|^{-2}\right) \right)t\right)-\exp\big(\left(-\mu(|\xi|)|\xi|^2+\ml{O}(\mu(|\xi|)^{-1})\right)t\big)}{\mu(|\xi|)|\xi|^2+\ml{O}(\mu(|\xi|)^{-1})}
\end{align*}
for large frequencies $\xi\in\ml{Z}_{\extt}(N)$. So, we estimate
\begin{align*}
\chi_{\extt}(\xi)|\widehat{K}_0(t,|\xi|)|&\lesssim\chi_{\extt}(\xi)\left(\frac{\mathrm{e}^{-c\mu(|\xi|)|\xi|^2t}}{\mu(|\xi|)^2|\xi|^2}+\mathrm{e}^{-\frac{ct}{\mu(|\xi|)}}\right)\lesssim\chi_{\extt}(\xi)\mathrm{e}^{-\frac{ct}{\mu(|\xi|)}},\\
\chi_{\extt}(\xi)|\widehat{K}_1(t,|\xi|)|&\lesssim\frac{\chi_{\extt}(\xi)}{\mu(|\xi|)|\xi|^2}\left(\mathrm{e}^{-c\mu(|\xi|)|\xi|^2t}+  \mathrm{e}^{-\frac{ct}{\mu(|\xi|)}}\right)\lesssim\frac{\chi_{\extt}(\xi)}{\mu(|\xi|)|\xi|^2}\mathrm{e}^{-\frac{ct}{\mu(|\xi|)}},
\end{align*}
for any $t>0$, where we considered Hypothesis \ref{Hyp_Diss} again.

\medskip
\noindent\underline{Asymptotic behaviors for small frequencies:} Concerning $|\xi|\leqslant\varepsilon\ll 1$, the situation will be changed completely. Particularly, the characteristic roots will be expanded as follows:
\begin{align*}
\lambda_{\pm}(|\xi|)&=-\frac{\mu(|\xi|)|\xi|^2}{2}\pm i|\xi|\sqrt{1-\frac{\mu(|\xi|)^2|\xi|^2}{4}}\\
&=\pm i|\xi|-\frac{\mu(|\xi|)|\xi|^2}{2}+\ml{O}\left(\mu(|\xi|)^2|\xi|^3\right),
\end{align*}
where we considered $\lim\nolimits_{|\xi|\downarrow0}(\mu(|\xi|)|\xi|)=0$ in Hypothesis \ref{Hyp_Diss}. Furthermore, it has
\begin{align*}
\lambda_+(|\xi|)-\lambda_-(|\xi|)=2i|\xi|+\ml{O}\left(\mu(|\xi|)^2|\xi|^3\right).
\end{align*}
As a consequence, we may claim
\begin{align*}
\widehat{K}_0(t,|\xi|)&=\frac{\left(i|\xi|-\frac{\mu(|\xi|)|\xi|^2}{2}+\ml{O}\left(\mu(|\xi|)^2|\xi|^3\right)\right)\exp\left(\left(-i|\xi|-\frac{\mu(|\xi|)|\xi|^2}{2}+\ml{O}\left(\mu(|\xi|)^2|\xi|^3\right)\right)t\right)}{2i|\xi|+\ml{O}\left(\mu(|\xi|)^2|\xi|^3\right)}\\
&\quad-\frac{\left(-i|\xi|-\frac{\mu(|\xi|)|\xi|^2}{2}+\ml{O}\left(\mu(|\xi|)^2|\xi|^3\right)\right)\exp\left(\left(i|\xi|-\frac{\mu(|\xi|)|\xi|^2}{2}+\ml{O}\left(\mu(|\xi|)^2|\xi|^3\right)\right)t\right)}{2i|\xi|+\ml{O}\left(\mu(|\xi|)^2|\xi|^3\right)}
\end{align*}
and
\begin{align*}
\widehat{K}_1(t,|\xi|)=\frac{\exp\left(\left(i|\xi|-\frac{\mu(|\xi|)|\xi|^2}{2}+\ml{O}\left(\mu(|\xi|)^2|\xi|^3\right)\right)t\right)- \exp\left(\left(-i|\xi|-\frac{\mu(|\xi|)|\xi|^2}{2}+\ml{O}\left(\mu(|\xi|)^2|\xi|^3\right)\right)t\right)}{2i|\xi|+\ml{O}\left(\mu(|\xi|)^2|\xi|^3\right)}
\end{align*}
for small frequencies $\xi\in\ml{Z}_{\intt}(\varepsilon)$. We now may obtain
\begin{align*}
\chi_{\intt}(\xi)|\widehat{K}_0(t,|\xi|)|&\lesssim\chi_{\intt}(\xi)|\cos(|\xi|t)|\mathrm{e}^{-c\mu(|\xi|)|\xi|^2t},\\
\chi_{\intt}(\xi)|\widehat{K}_1(t,|\xi|)|&\lesssim\chi_{\intt}(\xi)\frac{|\sin(|\xi|t)|}{|\xi|}\mathrm{e}^{-c\mu(|\xi|)|\xi|^2t},
\end{align*}
for any $t>0$. Roughly speaking, there are some challenges in estimates for $\widehat{K}_1(t,|\xi|)$ coming from the unclear combined effect of the oscillating structure $\sin(|\xi|t)$, dissipative part $\mathrm{e}^{-c\mu(|\xi|)|\xi|^2t}$ and singularities $|\xi|^{-1}$ as $\xi\in\ml{Z}_{\intt}(\varepsilon)$.

Summarizing the derived estimates in the above and applying the representation of solution \eqref{Representation} in the Fourier space, we can conclude the next sharp estimates.
\begin{prop}\label{Prop_RE_Est}
Let us assume $\mu\in\ml{C}([0,\infty))$ fulfilling Hypothesis \ref{Hyp_Diss}. Then, the following pointwise estimates for the solution of the Cauchy problem \eqref{Eq_Fourier_Linear} hold:
\begin{align*}
	\chi_{\intt}(\xi)|\hat{u}(t,\xi)|&\lesssim\chi_{\intt}(\xi)\mathrm{e}^{-c\mu(|\xi|)|\xi|^2t} \left(|\cos(|\xi|t)|\,|\hat{u}_0(\xi)|+\frac{|\sin(|\xi|t)|}{|\xi|}|\hat{u}_1(\xi)|\right),\\
	\chi_{\bdd}(\xi)|\hat{u}(t,\xi)|&\lesssim\chi_{\bdd}(\xi) \mathrm{e}^{-ct}\left( |\hat{u}_0(\xi)|+|\hat{u}_1(\xi)| \right),\\
	\chi_{\extt}(\xi)|\hat{u}(t,\xi)|&\lesssim \chi_{\extt}(\xi)\mathrm{e}^{-\frac{ct}{\mu(|\xi|)}}\left(|\hat{u}_0(\xi)|+\frac{1}{\mu(|\xi|)|\xi|^2}|\hat{u}_1(\xi)|\right),
\end{align*}
with $c>0$ for any $\xi\in\mb{R}^n$ and $t>0$.
\end{prop}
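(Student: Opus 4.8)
The plan is to read the three estimates directly off the solution representation \eqref{Representation}, $\hat u(t,\xi)=\widehat{K}_0(t,|\xi|)\hat u_0(\xi)+\widehat{K}_1(t,|\xi|)\hat u_1(\xi)$, after multiplying by the partition of unity $1=\chi_{\intt}(\xi)+\chi_{\bdd}(\xi)+\chi_{\extt}(\xi)$. Since $|\hat u(t,\xi)|\le|\widehat{K}_0(t,|\xi|)|\,|\hat u_0(\xi)|+|\widehat{K}_1(t,|\xi|)|\,|\hat u_1(\xi)|$, it suffices to bound the two Fourier kernels on the support of each cut-off. On the shell $\ml{Z}_{\bdd}(\varepsilon,N)$ I would not use the explicit roots at all: Proposition \ref{Prop_Pointwise} gives $|\hat u_t(t,\xi)|^2+|\xi|^2|\hat u(t,\xi)|^2\lesssim\mathrm{e}^{-c\rho(|\xi|)t}(|\xi|^2|\hat u_0(\xi)|^2+|\hat u_1(\xi)|^2)$, and by \eqref{Behavior_of_Key_Function} one has $\rho(|\xi|)\approx c>0$ there, so dividing through by $|\xi|^2\gtrsim\varepsilon^2$ yields $\chi_{\bdd}(\xi)|\hat u(t,\xi)|\lesssim\chi_{\bdd}(\xi)\mathrm{e}^{-ct}(|\hat u_0(\xi)|+|\hat u_1(\xi)|)$, which is the middle line.

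On $\ml{Z}_{\extt}(N)$ I would substitute the large-frequency expansions $\lambda_+(|\xi|)=-\mu(|\xi|)^{-1}+\ml{O}(\mu(|\xi|)^{-3}|\xi|^{-2})$, $\lambda_-(|\xi|)=-\mu(|\xi|)|\xi|^2+\ml{O}(\mu(|\xi|)^{-1})$ and $\lambda_+(|\xi|)-\lambda_-(|\xi|)=\mu(|\xi|)|\xi|^2+\ml{O}(\mu(|\xi|)^{-1})\approx\mu(|\xi|)|\xi|^2$ --- legitimate since $(\mu(|\xi|)|\xi|)^{-1}\to0$ by Hypothesis \ref{Hyp_Diss} --- into the quotients defining $\widehat{K}_0$ and $\widehat{K}_1$. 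This gives $|\widehat{K}_0(t,|\xi|)|\lesssim(\mu(|\xi|)|\xi|)^{-2}\mathrm{e}^{-c\mu(|\xi|)|\xi|^2t}+\mathrm{e}^{-ct/\mu(|\xi|)}$ and $|\widehat{K}_1(t,|\xi|)|\lesssim(\mu(|\xi|)|\xi|^2)^{-1}(\mathrm{e}^{-c\mu(|\xi|)|\xi|^2t}+\mathrm{e}^{-ct/\mu(|\xi|)})$; the decisive reduction is that $\mu(|\xi|)^2|\xi|^2\gtrsim1$ on $\ml{Z}_{\extt}(N)$ (again from $\mu(|\xi|)|\xi|\to\infty$), hence $\mathrm{e}^{-c\mu(|\xi|)|\xi|^2t}\le\mathrm{e}^{-ct/\mu(|\xi|)}$ and $(\mu(|\xi|)|\xi|)^{-2}\lesssim1$, collapsing both bounds to $\chi_{\extt}(\xi)|\hat u(t,\xi)|\lesssim\chi_{\extt}(\xi)\mathrm{e}^{-ct/\mu(|\xi|)}(|\hat u_0(\xi)|+(\mu(|\xi|)|\xi|^2)^{-1}|\hat u_1(\xi)|)$.

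On $\ml{Z}_{\intt}(\varepsilon)$ I would use $\lambda_\pm(|\xi|)=\pm i|\xi|-\tfrac12\mu(|\xi|)|\xi|^2+\ml{O}(\mu(|\xi|)^2|\xi|^3)$ and $\lambda_+(|\xi|)-\lambda_-(|\xi|)=2i|\xi|+\ml{O}(\mu(|\xi|)^2|\xi|^3)\approx2i|\xi|$, valid since $\mu(|\xi|)|\xi|\to0$ as $|\xi|\to0$. Writing $\lambda_\pm=\pm i\omega-\delta$ with $\omega=|\xi|\sqrt{1-\mu(|\xi|)^2|\xi|^2/4}$ and $\delta=\tfrac12\mu(|\xi|)|\xi|^2$ makes the cancellation explicit: a direct computation gives $\widehat{K}_0(t,|\xi|)=\mathrm{e}^{-\delta t}\big(\cos(\omega t)+\tfrac{\delta}{\omega}\sin(\omega t)\big)$ and $\widehat{K}_1(t,|\xi|)=\mathrm{e}^{-\delta t}\tfrac{\sin(\omega t)}{\omega}$, and since $\delta/\omega=\tfrac12\mu(|\xi|)|\xi|(1+o(1))\to0$ and $\omega=|\xi|(1+o(1))$ on $\ml{Z}_{\intt}(\varepsilon)$ one reads off $\chi_{\intt}(\xi)|\widehat{K}_0(t,|\xi|)|\lesssim\chi_{\intt}(\xi)|\cos(|\xi|t)|\mathrm{e}^{-c\mu(|\xi|)|\xi|^2t}$ and $\chi_{\intt}(\xi)|\widehat{K}_1(t,|\xi|)|\lesssim\chi_{\intt}(\xi)\tfrac{|\sin(|\xi|t)|}{|\xi|}\mathrm{e}^{-c\mu(|\xi|)|\xi|^2t}$. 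Multiplying the three pairs of kernel bounds by $|\hat u_0(\xi)|$ and $|\hat u_1(\xi)|$ and summing over the partition yields the proposition.

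I expect the genuine work to sit entirely in these expansions, and specifically in showing that the $\ml{O}(\cdot)$ remainders are uniform over the unbounded zones $\ml{Z}_{\intt}(\varepsilon)$ and $\ml{Z}_{\extt}(N)$, not merely pointwise in $|\xi|$, and that in the exponent they are genuinely subordinate to the leading term $\tfrac12\mu(|\xi|)|\xi|^2$ (resp.\ $\mu(|\xi|)^{-1}$), so that a single constant $c>0$ works throughout; all of this follows from the two limits in Hypothesis \ref{Hyp_Diss} together with continuity of $\mu$, so that --- in contrast with the usual contradiction/compactness scheme on $\ml{Z}_{\bdd}$ --- no such argument is needed. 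A milder point is the interior zone: one must check that replacing $\cos(\omega t),\sin(\omega t)$ by $\cos(|\xi|t),\sin(|\xi|t)$ and dropping $\delta/\omega$ costs only a correction of relative size $\mu(|\xi|)|\xi|\to0$, which follows because $\mu(|\xi|)|\xi|$ is uniformly small on $\ml{Z}_{\intt}(\varepsilon)$ once $\varepsilon$ is chosen small enough and because $x\,\mathrm{e}^{-cx}\lesssim\mathrm{e}^{-c'x}$ absorbs the extra power of $\mu(|\xi|)|\xi|^2t$ coming from the phase difference $|\omega-|\xi||\,t$.
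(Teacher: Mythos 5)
Your proposal is correct and follows essentially the same route as the paper: the energy estimate of Proposition \ref{Prop_Pointwise} with $\rho(|\xi|)\approx c>0$ on the bounded zone (explicitly avoiding a compactness/contradiction argument, exactly as the paper remarks), and the expansions of $\lambda_{\pm}(|\xi|)$ inserted into the representation \eqref{Representation} on $\ml{Z}_{\intt}(\varepsilon)$ and $\ml{Z}_{\extt}(N)$, with the same use of $\mu(|\xi|)|\xi|\to\infty$ to absorb $\mathrm{e}^{-c\mu(|\xi|)|\xi|^2t}$ into $\mathrm{e}^{-ct/\mu(|\xi|)}$ at high frequencies. The only cosmetic difference is that on the interior zone you work with the exact closed forms $\widehat{K}_0=\mathrm{e}^{-\delta t}\bigl(\cos(\omega t)+\tfrac{\delta}{\omega}\sin(\omega t)\bigr)$ and $\widehat{K}_1=\mathrm{e}^{-\delta t}\tfrac{\sin(\omega t)}{\omega}$ instead of the paper's $\ml{O}$-expansions of the exponentials, which changes nothing of substance.
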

\begin{remark}
The pointwise estimates in Proposition \ref{Prop_Pointwise} established by energy methods in the Fourier space seem to be sharp in the sense that asymptotic behaviors of the key function $\rho(|\xi|)$, i.e. the approximation \eqref{Behavior_of_Key_Function}, coincide with the exponential functions of each zone in Proposition \ref{Prop_RE_Est}. Nevertheless, we may observe a crucial difference $|\sin(|\xi|t)|$ occurring for $\xi\in\ml{Z}_{\intt}(\varepsilon)$. This function will be controlled by boundedness when we used energy methods, and appear when we applied explicit asymptotic analysis. Additionally, the $|\xi|$-dependent coefficient of $\hat{u}_1$ for large frequencies has been improved $1/(\mu(|\xi|)|\xi|)$-order.
\end{remark}
\subsection{Estimates of the solution itself in lower-dimensions}
In Corollary \ref{Coro_Solution_high}, we have estimated $u(t,\cdot)$ in the $L^2$ norm for $n\geqslant 3$ whose restriction originates from the strong singularity $1/|\xi|$ in $\widehat{K}_1(t,|\xi|)$ as $|\xi|\downarrow 0$. Comparing with the estimate \eqref{Coro_Est} from an application of energy methods, we got sharper estimates in Proposition \ref{Prop_RE_Est}. It is the key point for improvements.
\begin{theorem}\label{Thm_Solution_Itself}
Let us assume that Hypothesis \ref{Hyp_Diss} holds. Then, the solution to the Cauchy problem \eqref{Eq_Linear_General_Damped_Waves} for $n\geqslant 1$ fulfills the following estimates:
\begin{align*}
	\|u(t,\cdot)\|_{L^2}\lesssim
	\begin{cases}
		(1+t)^{-\frac{1}{2}\min\left\{\alpha_{n,0}^m-2\delta,2\ell_0 \right\}}\|u_0\|_{\dot{H}^{0}_{\mu,\ell_0}\cap L^1}&\\
		\quad+(1+t)^{-\frac{1}{2}\min\left\{\alpha_{n,-1}^m-2\delta,2(\ell_1+1) \right\}}\|u_1\|_{\dot{H}^{-2}_{\mu,\ell_1}\cap L^1}&\mbox{if}\ \ 	\lim\limits_{r\to\infty}\mu(r)=\infty,\\
		(1+t)^{-\frac{1}{2}\alpha_{n,0}^m+\delta}\|u_0\|_{L^2\cap L^1}&\\
		\quad+(1+t)^{-\frac{1}{2}\alpha_{n,-1}^m+\delta}\|u_1\|_{L^2\cap L^1}&\mbox{if}\ \ \lim\limits_{r\to\infty}\mu(r)<\infty, 
	\end{cases}
\end{align*}
with $\ell_0,\ell_1\geqslant0$ for any $\delta>0$, where the numbers $\alpha_{n,k-1}^m-2\delta\in A_{\intt}^{n,k-1}$ for $k=0,1$.
\end{theorem}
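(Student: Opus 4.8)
The plan is to split the $L^2$ norm of $u(t,\cdot)$ into the three frequency zones $\ml{Z}_{\intt}(\varepsilon)$, $\ml{Z}_{\bdd}(\varepsilon,N)$ and $\ml{Z}_{\extt}(N)$ exactly as in the proof of Theorem~\ref{Thm_Energy_Decay}, but now feeding in the \emph{sharper} pointwise bounds of Proposition~\ref{Prop_RE_Est} rather than the rougher ones from Proposition~\ref{Prop_Pointwise}. The bounded-frequency part is handled immediately: from the middle estimate in Proposition~\ref{Prop_RE_Est} one has $\chi_{\bdd}(\xi)|\hat u(t,\xi)|\lesssim \mathrm{e}^{-ct}(|\hat u_0(\xi)|+|\hat u_1(\xi)|)$, and Plancherel gives an exponentially decaying contribution controlled by $\|u_0\|_{L^2}+\|u_1\|_{L^2}$, which is dominated by every term on the right-hand side.

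For the large-frequency zone I would use $\chi_{\extt}(\xi)|\hat u(t,\xi)|\lesssim \mathrm{e}^{-ct/\mu(|\xi|)}(|\hat u_0(\xi)|+\tfrac{1}{\mu(|\xi|)|\xi|^2}|\hat u_1(\xi)|)$. The $u_0$-term is treated by Lemma~\ref{Lem_Decay_Large} with $s=0$, producing $(1+t)^{-\ell_0}\|u_0\|_{\dot H^{0}_{\mu,\ell_0}}$ (or an exponential bound in $\|u_0\|_{L^2}$ when $\lim_{r\to\infty}\mu(r)<\infty$). For the $u_1$-term, note that $\tfrac{1}{\mu(|\xi|)|\xi|^2}=\mu(|\xi|)^{-1}|\xi|^{-2}$, so applying Lemma~\ref{Lem_Decay_Large} with $s=-2$ and with $\ell$ replaced by $\ell_1+1$ absorbs one extra power of $\mu(|\xi|)^{-1}$ into the decay and leaves $\mu(|\xi|)^{\ell_1}|\xi|^{-2}\hat u_1$, i.e. the $\dot H^{-2}_{\mu,\ell_1}$ norm; this is precisely where the improved coefficient $1/(\mu(|\xi|)|\xi|^2)$ of Proposition~\ref{Prop_RE_Est} (as opposed to the mere $1/|\xi|$ of \eqref{Coro_Est}) pays off, both in the reduced regularity $\dot H^{-2}_{\mu,\ell_1}$ and in the enhanced exponent $2(\ell_1+1)$.

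The small-frequency zone is where the real work lies. Here Proposition~\ref{Prop_RE_Est} gives
\[
\chi_{\intt}(\xi)|\hat u(t,\xi)|\lesssim \chi_{\intt}(\xi)\mathrm{e}^{-c\mu(|\xi|)|\xi|^2 t}\Bigl(|\cos(|\xi|t)|\,|\hat u_0(\xi)|+\tfrac{|\sin(|\xi|t)|}{|\xi|}|\hat u_1(\xi)|\Bigr).
\]
Bounding $|\cos(|\xi|t)|\le 1$, the $u_0$-contribution is $\bigl\|\chi_{\intt}(\xi)\mathrm{e}^{-c|\xi|^2\mu(|\xi|)t}\bigr\|_{L^2}\|\hat u_0\|_{L^\infty}\lesssim (1+t)^{-\frac12\alpha_{n,0}^m+\delta}\|u_0\|_{L^1}$ by Lemma~\ref{Lem_Decay_Small} and Hausdorff--Young, and this matches the stated rate; one also interpolates with the $\dot H^{0}_{\mu,\ell_0}$ bound from the exterior zone to get the $\min\{\alpha_{n,0}^m-2\delta,2\ell_0\}$ exponent. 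For $u_1$ the crucial observation — the whole point of working in $n\ge 1$ rather than $n\ge 3$ — is that $\tfrac{|\sin(|\xi|t)|}{|\xi|}\lesssim \min\{t,|\xi|^{-1}\}$, but more usefully $\tfrac{|\sin(|\xi|t)|}{|\xi|}\lesssim |\xi|^{-1}$ combined with the fact that we may instead bound $|\sin(|\xi|t)|\lesssim |\xi|^{\sigma}t^{\sigma}$ for any $\sigma\in[0,1]$ to trade the singularity against a power of $t$; choosing $\sigma$ appropriately makes the integrand $r^{2(-1+\sigma)+n-1}\mathrm{e}^{-cr^2\mu(r)t}$ integrable at $r=0$ even for $n=1,2$, and the resulting bound is $(1+t)^{-\frac12\alpha_{n,-1}^m+\delta}\|u_1\|_{L^1}$ after optimizing — equivalently one directly applies Lemma~\ref{Lem_Decay_Small} with $s=-1$, whose hypothesis $s>-n/2$ reads $n>2$, so for $n=1,2$ one genuinely needs the $\sin$ factor to supply the missing integrability. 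I expect \textbf{this estimate of the $\widehat{K}_1$ term for small frequencies in low dimensions} to be the main obstacle: one must carefully balance the oscillation $\sin(|\xi|t)$, the parabolic factor $\mathrm{e}^{-c\mu(|\xi|)|\xi|^2t}$, and the singularity $|\xi|^{-1}$, and then interpolate with the exterior-zone $\dot H^{-2}_{\mu,\ell_1}$ estimate to obtain the $\min\{\alpha_{n,-1}^m-2\delta,2(\ell_1+1)\}$ exponent. Combining the three zones yields the claimed estimate.
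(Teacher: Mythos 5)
Your decomposition into the three zones, the use of Proposition \ref{Prop_RE_Est} in place of Proposition \ref{Prop_Pointwise}, and the large-frequency computation (converting the coefficient $1/(\mu(|\xi|)|\xi|^2)$ into one extra power of $(1+t)^{-1}$ plus the $\dot H^{-2}_{\mu,\ell_1}$ norm via Lemma \ref{Lem_Decay_Large}) are exactly the paper's argument, and the $u_0$-terms are handled identically.

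There is, however, a genuine gap in the one place you yourself flag as the main obstacle: the small-frequency $\widehat K_1$ term for $n=1,2$ at large times. Your proposed trade $|\sin(|\xi|t)|\lesssim|\xi|^{\sigma}t^{\sigma}$ does restore integrability at $\xi=0$, but the leftover factor $t^{\sigma}$ is not free: applying Lemma \ref{Lem_Decay_Small} with $s=-1+\sigma$ yields $t^{\sigma}\cdot t^{-\frac12\alpha^m_{n,-1+\sigma}+\delta}$, and (as one checks already for $\mu(r)=r^{\beta}$, where $\frac12\alpha^m_{n,s}=\frac{2s+n}{2(2+\beta)}$) the exponent worsens by $\sigma\frac{1+\beta}{2+\beta}>0$ relative to the claimed rate $t^{-\frac12\alpha^m_{n,-1}+\delta}$; optimizing drives $\sigma\to0$, which reinstates the divergence. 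So this route, used uniformly in $t$, does not reach the stated exponent, and your fallback (Lemma \ref{Lem_Decay_Small} with $s=-1$) is, as you note, outside the lemma's hypothesis for $n\le2$. The paper's resolution is a split in \emph{time} rather than an interpolation in $\sigma$: for $t\le1$ one writes $\frac{|\sin(|\xi|t)|}{|\xi|}=t\,\frac{|\sin(|\xi|t)|}{|\xi|t}\le t$, which removes the singularity entirely and gives a uniform bound; for $t\ge1$ one simply bounds $|\sin|\le1$ and repeats the \emph{large-time half} of the proof of Lemma \ref{Lem_Decay_Small} with $s=-1$, observing that the restriction $s>-n/2$ was only ever used for the small-time estimate there — the rescaling identity requires nothing beyond $\alpha^m_{n,-1}-2\delta\in A^{n,-1}_{\intt}$, which is a standing hypothesis of the theorem. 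With that observation supplied, the rest of your argument goes through and coincides with the paper's proof.
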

\begin{remark}
	The last theorem improves the one in Corollary \ref{Coro_Solution_high} as follows:
\begin{enumerate}[(1)]
	\item we actually did not restrict ourselves on the dimensions since the singularity $1/|\xi|$ for $\xi\in\ml{Z}_{\intt}(\varepsilon)$ with small time has been compensated by $|\sin(|\xi|t)|$;
	\item if $\lim\limits_{r\to\infty}\mu(r)=\infty$, we get $(1+t)^{-1}$ improvement on the time-dependent function, and $\dot{H}^{-1}$ on the regularity for the $u_1$ data since the accurate estimate in the Fourier space with the factor $1/(\mu(|\xi|)|\xi|)$ for $\xi\in\ml{Z}_{\extt}(N)$. 
\end{enumerate}
\end{remark}
\begin{proof}[Proof of Theorem \ref{Thm_Solution_Itself}]
Applying Proposition \ref{Prop_RE_Est}, we now may arrive at
\begin{align*}
\left\|\chi_{\intt}(\xi)\hat{u}(t,\xi)\right\|_{L^2}&\lesssim\left\|\chi_{\intt}(\xi)\cos(|\xi|t)\mathrm{e}^{-c\mu(|\xi|)|\xi|^2t}\right\|_{L^2}\|u_0\|_{L^1}+\left\|\chi_{\intt}(\xi)\frac{\sin(|\xi|t)}{|\xi|}\mathrm{e}^{-c\mu(|\xi|)|\xi|^2t}\right\|_{L^2}\|u_1\|_{L^1}\\
&=:I_2(t)\|u_0\|_{L^1}+I_3(t)\|u_1\|_{L^1},
\end{align*}
where H\"older's inequality as well as the Hausdorff-Young inequality were employed. Distinctly from Lemma \ref{Lem_Decay_Small} associated with $|\cos(|\xi|t)|\leqslant 1$, we notice that
\begin{align*}
I_2(t)\lesssim\left\|\chi_{\intt}(\xi)\mathrm{e}^{-c\mu(|\xi|)|\xi|^2t}\right\|_{L^2}\lesssim (1+t)^{-\frac{1}{2}\alpha_{n,0}^{m}+\delta}
\end{align*}
for any $n\geqslant 1$ and any $\delta>0$. Next, we compute estimates for $I_3(t)$ with caution. For $t\leqslant 1$, we straightway gain
\begin{align*}
I_3(t)=t\left(\int_{|\xi|\leqslant\varepsilon}\mathrm{e}^{-2c\mu(|\xi|)|\xi|^2t}\frac{|\sin(|\xi|t)|^2}{(|\xi|t)^2}\mathrm{d}\xi\right)^{1/2}\lesssim 1
\end{align*}
by employing $|\sin(|\xi|t)|\leqslant |\xi| t$ since $|\xi|t\leqslant \varepsilon t\ll 1$. While $t\geqslant 1$, we straightly imitate follow the proof of Lemma \ref{Lem_Decay_Small}, i.e. $I_1(t)$ with $s=-1$, and achieve
\begin{align*}
I_3(t)\lesssim \left(\int_{|\xi|\leqslant\varepsilon}|\xi|^{-2}\mathrm{e}^{-2c\mu(|\xi|)|\xi|^2t}\mathrm{d}\xi\right)^{1/2}\lesssim t^{-\frac{1}{2}\alpha_{n,-1}^m+\delta}
\end{align*}
for any $n\geqslant 1$. For this sake, we draw the conclusion
\begin{align*}
		\left\|\chi_{\intt}(D)u(t,\cdot)\right\|_{L^2}&=\left\|\chi_{\intt}(\xi)\hat{u}(t,\xi)\right\|_{L^2}\\
		&\lesssim  (1+t)^{-\frac{1}{2}\alpha_{n,0}^{m}+\delta}\|u_0\|_{L^1}+(1+t)^{-\frac{1}{2}\alpha_{n,-1}^{m}+\delta}\|u_1\|_{L^1}.
\end{align*}

Let us turn toward the case $\xi\in\ml{Z}_{\extt}(N)$ with $\lim\nolimits_{r\to\infty}\mu(r)=\infty$ since another case can be directly followed the proof of Theorem \ref{Thm_Energy_Decay}. By taking account into 
\begin{align*}
\chi_{\extt}(\xi)\frac{\mathrm{e}^{-\frac{ct}{\mu(|\xi|)}}}{\mu(|\xi|)|\xi|^2}
%&\lesssim
%\begin{cases}
%	\displaystyle{\frac{\chi_{\extt}(\xi)}{|\xi|^2}}\mathrm{e}^{-\frac{ct}{\mu(|\xi|)}}&\mbox{for}\ \ t\leqslant 1\\
%	\displaystyle{t^{-1}\frac{\chi_{\extt}(\xi)}{|\xi|^2}}\mathrm{e}^{-\frac{ct}{\mu(|\xi|)}}&\mbox{for}\ \ t\geqslant1
%\end{cases}\\
&\lesssim (1+t)^{-1}\frac{\chi_{\extt}(\xi)}{|\xi|^2}\mathrm{e}^{-\frac{ct}{\mu(|\xi|)}},
\end{align*}
one derives
\begin{align*}
\chi_{\extt}(\xi)|\hat{u}(t,\xi)|\lesssim\chi_{\extt}(\xi)\mathrm{e}^{-\frac{ct}{\mu(|\xi|)}}\left(|\hat{u}_0(\xi)|+(1+t)^{-1}|\xi|^{-2}|\hat{u}_1(\xi)|\right).
\end{align*}
Finally, the use of Lemma \ref{Lem_Decay_Large}, we obtain
\begin{align*}
\left\|\chi_{\extt}(D)u(t,\cdot)\right\|_{L^2}\lesssim(1+t)^{-\ell_0}\|u_0\|_{\dot{H}^0_{\mu,\ell_0}}+(1+t)^{-\ell_1-1}\|u_1\|_{\dot{H}^{-2}_{\mu,\ell_1}},
\end{align*}
providing that $\lim\nolimits_{r\to\infty}\mu(r)=\infty$. Because of the exponential stability for bounded frequencies, our proof is completed.
\end{proof}
\subsection{Asymptotic profiles of kernels in the Fourier space}\label{Sub-Sec_Kernel_Profiles}
Before stating some approximations for the kernels for $\xi\in\ml{Z}_{\intt}(\varepsilon)$ and $\xi\in\ml{Z}_{\extt}(N)$, respectively, let us introduce some auxiliary functions
\begin{align*}
	\widehat{\ml{G}}_0(t,|\xi|):=\mathrm{e}^{-\frac{t}{\mu(|\xi|)}} \ \ &\mbox{and}\ \ \widehat{\ml{G}}_1(t,|\xi|):=\frac{1}{\mu(|\xi|)|\xi|^2}\mathrm{e}^{-\frac{t}{\mu(|\xi|)}},\\
	\widehat{\ml{H}}_0(t,|\xi|):=\cos(|\xi|t)\mathrm{e}^{-\frac{1}{2}\mu(|\xi|)|\xi|^2t}\ \ &\mbox{and}\ \ \widehat{\ml{H}}_1(t,|\xi|):=\frac{\sin(|\xi|t)}{|\xi|}\mathrm{e}^{-\frac{1}{2}\mu(|\xi|)|\xi|^2t}. 
\end{align*}
The definitions of these Fourier multipliers are strongly motivated by asymptotic behaviors of the kernels in the representation of solution as $\xi\in\ml{Z}_{\intt}(\varepsilon)\cup\ml{Z}_{\extt}(N)$. Two propositions in this subsection serve for the proof of our vital estimates in Theorem \ref{Thm_Asymptotic_Profiles} later.

For large frequencies, we just need to study the situation $\lim\nolimits_{|\xi|\to\infty}\mu(|\xi|)=\infty$ due to the fact that an exponential decay holds (without any loss of regularity) for the solution provided $\lim\nolimits_{r\to\infty}\mu(r)<\infty$ in Theorem \ref{Thm_Solution_Itself}.
\begin{prop}\label{Prop_Refine_large}
Let us assume that Hypothesis \ref{Hyp_Diss} and $\lim\nolimits_{r\to\infty}\mu(r)=\infty$ hold. Then, the following pointwise estimates for the difference between two Fourier multipliers hold:
\begin{align}
\left|\chi_{\extt}(\xi)\left(\widehat{K}_0(t,|\xi|)-\widehat{\ml{G}}_0(t,|\xi|)\right)\right|&\lesssim\chi_{\extt}(\xi)\frac{\mathrm{e}^{-\frac{ct}{\mu(|\xi|)}}}{\mu(|\xi|)^2|\xi|^2},\label{Difference_Large_1}\\
\left|\chi_{\extt}(\xi)\left(\widehat{K}_1(t,|\xi|)-\widehat{\ml{G}}_1(t,|\xi|)\right)\right|&\lesssim\chi_{\extt}(\xi)\frac{\mathrm{e}^{-\frac{ct}{\mu(|\xi|)}}}{\mu(|\xi|)^3|\xi|^4},\label{Difference_Large_2}
\end{align}
with $c>0$ for any $\xi\in\mb{R}^n$ and $t>0$.
\end{prop}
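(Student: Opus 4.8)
The plan is to start from the exact representation \eqref{Representation} of the kernels $\widehat{K}_0,\widehat{K}_1$ together with the asymptotic expansions of the characteristic roots $\lambda_{\pm}(|\xi|)$ for $\xi\in\ml{Z}_{\extt}(N)$ that were already carried out just above Proposition \ref{Prop_RE_Est}. Recall that for large frequencies
\begin{align*}
\lambda_+(|\xi|)=-\frac{1}{\mu(|\xi|)}+\ml{O}\left(\mu(|\xi|)^{-3}|\xi|^{-2}\right),\qquad \lambda_-(|\xi|)=-\mu(|\xi|)|\xi|^2+\ml{O}\left(\mu(|\xi|)^{-1}\right),
\end{align*}
and $\lambda_+-\lambda_-=\mu(|\xi|)|\xi|^2+\ml{O}(\mu(|\xi|)^{-1})$. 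The target profiles $\widehat{\ml{G}}_0=\mathrm{e}^{-t/\mu(|\xi|)}$ and $\widehat{\ml{G}}_1=(\mu(|\xi|)|\xi|^2)^{-1}\mathrm{e}^{-t/\mu(|\xi|)}$ are exactly the contributions of the $\mathrm{e}^{\lambda_+t}$ term with $\lambda_+$ replaced by its leading value $-1/\mu(|\xi|)$ and the prefactors replaced by their leading values; so the difference splits naturally into (i) the genuinely small $\mathrm{e}^{\lambda_- t}$ piece, (ii) the error from replacing the algebraic prefactors by their main terms, and (iii) the error from replacing $\mathrm{e}^{\lambda_+ t}$ by $\mathrm{e}^{-t/\mu(|\xi|)}$.

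For piece (i): every term containing $\mathrm{e}^{\lambda_- t}$ carries a factor $\mathrm{e}^{-c\mu(|\xi|)|\xi|^2 t}$, which on $\ml{Z}_{\extt}(N)$ is $\lesssim \mathrm{e}^{-ct/\mu(|\xi|)}\cdot(\mu(|\xi|)|\xi|^2)^{-k}$ for any $k$ (since $\mu(|\xi|)|\xi|^2\to\infty$ there and $\mathrm{e}^{-c\mu|\xi|^2 t}$ decays faster than any negative power of $\mu|\xi|^2$ times a harmless $\mathrm{e}^{-ct/\mu}$ by comparing exponents, using $\mu(|\xi|)|\xi|^2\gtrsim 1/\mu(|\xi|)$), so it is absorbed into the right-hand sides of \eqref{Difference_Large_1}--\eqref{Difference_Large_2}. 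For piece (ii): writing the prefactors as $-\frac{1}{\mu}(1+\ml{O}(\mu^{-2}|\xi|^{-2}))$ and $\lambda_+-\lambda_-=\mu|\xi|^2(1+\ml{O}(\mu^{-2}|\xi|^{-2}))$ and expanding the quotient, the relative error is $\ml{O}(\mu^{-2}|\xi|^{-2})$; multiplying the main size $1$ (for $\widehat K_0$) or $(\mu|\xi|^2)^{-1}$ (for $\widehat K_1$) by this relative error and by $\mathrm{e}^{\lambda_+ t}\lesssim \mathrm{e}^{-ct/\mu}$ gives exactly the claimed bounds $\mathrm{e}^{-ct/\mu}(\mu^2|\xi|^2)^{-1}$ and $\mathrm{e}^{-ct/\mu}(\mu^3|\xi|^4)^{-1}$.

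Piece (iii) is the one I expect to require the most care: one must control $|\mathrm{e}^{\lambda_+ t}-\mathrm{e}^{-t/\mu(|\xi|)}|$ where $\lambda_+=-1/\mu(|\xi|)+\ml{O}(\mu(|\xi|)^{-3}|\xi|^{-2})$. Using $|\mathrm{e}^a-\mathrm{e}^b|\le |a-b|\,\mathrm{e}^{\max(\Re a,\Re b)}$ (mean value form of the exponential), with $a=\lambda_+ t$, $b=-t/\mu(|\xi|)$, one gets $|a-b|=t\cdot\ml{O}(\mu(|\xi|)^{-3}|\xi|^{-2})$ and $\max(\Re a,\Re b)\le -ct/\mu(|\xi|)$, so the difference is $\lesssim t\,\mu(|\xi|)^{-3}|\xi|^{-2}\mathrm{e}^{-ct/\mu(|\xi|)}$. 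The nuisance is the extra factor $t$: one removes it by shrinking the exponent, i.e. $t\,\mathrm{e}^{-ct/\mu}\lesssim \mu\cdot\mathrm{e}^{-(c/2)t/\mu}$ (since $\sup_{\tau\ge 0}\tau\mathrm{e}^{-c\tau/2}<\infty$ with $\tau=t/\mu$), which converts the $t\,\mu^{-3}$ into $\mu^{-2}$ at the cost of replacing $c$ by $c/2$. Carrying this through, piece (iii) contributes $\lesssim \mathrm{e}^{-ct/\mu(|\xi|)}(\mu(|\xi|)^{2}|\xi|^2)^{-1}$ for $\widehat K_0$ and, after the extra $(\mu|\xi|^2)^{-1}$ prefactor from $\widehat K_1$, $\lesssim \mathrm{e}^{-ct/\mu(|\xi|)}(\mu(|\xi|)^{3}|\xi|^4)^{-1}$; combining (i)--(iii) and relabelling $c$ finishes both \eqref{Difference_Large_1} and \eqref{Difference_Large_2}. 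Throughout one uses Hypothesis \ref{Hyp_Diss} only in the form $\mu(|\xi|)|\xi|\to\infty$ as $|\xi|\to\infty$, exactly as in the derivation of the expansions above, and $N\gg1$ is chosen so that all the $\ml{O}(\cdot)$ remainders are genuinely dominated.
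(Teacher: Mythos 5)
Your argument is correct and follows essentially the same route as the paper: the same expansions of $\lambda_{\pm}(|\xi|)$ on $\ml{Z}_{\extt}(N)$, the same three-way splitting into the $\mathrm{e}^{\lambda_-t}$ contribution, the algebraic-prefactor error and the exponential error, and the same absorption $t\,\mathrm{e}^{-t/\mu}\lesssim\mu\,\mathrm{e}^{-ct/\mu}$ to remove the linear factor of $t$. The only place you are looser than the paper is the claim that $\mathrm{e}^{-c\mu(|\xi|)|\xi|^2t}\lesssim\mathrm{e}^{-ct/\mu(|\xi|)}(\mu(|\xi|)|\xi|^2)^{-k}$ for \emph{any} $k$; the paper justifies precisely the $k=2$ instance needed for $\widehat{K}_1$ via a limit as $|\xi|\to\infty$, and in both versions this step is genuinely a large-frequency comparison rather than one uniform in small $t$, so it deserves the same care the paper gives it.
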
 
\begin{proof}[Proof of Proposition \ref{Prop_Refine_large}]
In the first place, we are going to demonstrate our desired estimate \eqref{Difference_Large_1}. On the basic of the asymptotic representation in the last subsection, one finds
\begin{align*}
&\left|\chi_{\extt}(\xi)\left(\widehat{K}_0(t,|\xi|)-\widehat{\ml{G}}_0(t,|\xi|)\right)\right|\\
&\qquad\lesssim\chi_{\extt}(\xi)\frac{\mathrm{e}^{-c\mu(|\xi|)|\xi|^2t}}{\mu(|\xi|)^2|\xi|^2}+\chi_{\extt}(\xi)\left|\frac{\mu(|\xi|)|\xi|^2+\ml{O}(\mu(|\xi|)^{-1})}{\mu(|\xi|)|\xi|^2+\ml{O}(\mu(|\xi|)^{-1})}\mathrm{e}^{-\frac{t}{\mu(|\xi|)}+\ml{O}(\mu(|\xi|)^{-3}|\xi|^{-2})t}-\mathrm{e}^{-\frac{t}{\mu(|\xi|)}}\right|\\
&\qquad\lesssim\chi_{\extt}(\xi)\frac{\mathrm{e}^{-c\mu(|\xi|)|\xi|^2t}}{\mu(|\xi|)^2|\xi|^2}+\chi_{\extt}(\xi)\left|\frac{\mu(|\xi|)|\xi|^2+\ml{O}(\mu(|\xi|)^{-1})}{\mu(|\xi|)|\xi|^2+\ml{O}(\mu(|\xi|)^{-1})}-1\right|\mathrm{e}^{-\frac{t}{\mu(|\xi|)}+\ml{O}(\mu(|\xi|)^{-3}|\xi|^{-2})t}\\
&\qquad\quad+\chi_{\extt}(\xi)\mathrm{e}^{-\frac{t}{\mu(|\xi|)}}\left|\mathrm{e}^{\ml{O}(\mu(|\xi|)^{-3}|\xi|^{-2})t}-1\right|.
\end{align*}
Then, we may see
\begin{align*}
	&\left|\chi_{\extt}(\xi)\left(\widehat{K}_0(t,|\xi|)-\widehat{\ml{G}}_0(t,|\xi|)\right)\right|\\
	&\qquad\lesssim\chi_{\extt}(\xi)\frac{\mathrm{e}^{-c\mu(|\xi|)|\xi|^2t}}{\mu(|\xi|)^2|\xi|^2}+\chi_{\extt}(\xi)\frac{\ml{O}(\mu(|\xi|)^{-1})}{\mu(|\xi|)|\xi|^2+\ml{O}(\mu(|\xi|)^{-1})}\mathrm{e}^{-\frac{t}{\mu(|\xi|)}+\ml{O}(\mu(|\xi|)^{-3}|\xi|^{-2})t}\\
	&\qquad\quad+\chi_{\extt}(\xi)\ml{O}\left(\mu(|\xi|)^{-3}|\xi|^{-2}\right)t\mathrm{e}^{-\frac{t}{\mu(|\xi|)}}\left|\int_0^1\mathrm{e}^{\ml{O}(\mu(|\xi|)^{-3}|\xi|^{-2})ts}\mathrm{d}s\right|\\
	&\qquad\lesssim\chi_{\extt}(\xi)\left(\frac{\mathrm{e}^{-c\mu(|\xi|)|\xi|^2t}}{\mu(|\xi|)^2|\xi|^2}+\frac{\mathrm{e}^{-\frac{ct}{\mu(|\xi|)}}}{\mu(|\xi|)^2|\xi|^2}+\frac{t\mathrm{e}^{-\frac{ct}{\mu(|\xi|)}}}{\mu(|\xi|)^3|\xi|^2}\right)\lesssim\chi_{\extt}(\xi)\frac{\mathrm{e}^{-\frac{ct}{\mu(|\xi|)}}}{\mu(|\xi|)^2|\xi|^2},
\end{align*}
in which we used
\begin{align*}
	\chi_{\extt}(\xi)\frac{t}{\mu(|\xi|)}\mathrm{e}^{-\frac{t}{\mu(|\xi|)}}\lesssim\chi_{\extt}(\xi)\mathrm{e}^{-\frac{ct}{\mu(|\xi|)}} \ \ \mbox{with}\ \ c>0.
\end{align*}

For another, in order to prove the estimate \eqref{Difference_Large_2}, we may follow the same philosophy as the previous one and get
\begin{align*}
	&\left|\chi_{\extt}(\xi)\left(\widehat{K}_1(t,|\xi|)-\widehat{\ml{G}}_1(t,|\xi|) \right)\right|\\
	&\qquad\lesssim\chi_{\extt}(\xi)\frac{\mathrm{e}^{-c\mu(|\xi|)|\xi|^2t}}{\mu(|\xi|)|\xi|^2}+\chi_{\extt}(\xi)\left|\frac{1}{\mu(|\xi|)|\xi|^2+\ml{O}(\mu(|\xi|)^{-1})}-\frac{1}{\mu(|\xi|)|\xi|^2}\right|\mathrm{e}^{-\frac{t}{\mu(|\xi|)}+\ml{O}(\mu(|\xi|)^{-3}|\xi|^{-2})t}\\
	&\qquad\quad+\chi_{\extt}(\xi)\frac{\mathrm{e}^{-\frac{t}{\mu(|\xi|)}}}{\mu(|\xi|)|\xi|^2}\left|\mathrm{e}^{\ml{O}(\mu(|\xi|)^{-3}|\xi|^{-2})t}-1\right|\\
	&\qquad\lesssim\chi_{\extt}(\xi)\left(\frac{\mathrm{e}^{-c\mu(|\xi|)|\xi|^2t}}{\mu(|\xi|)|\xi|^2}+\frac{\mathrm{e}^{-\frac{ct}{\mu(|\xi|)}}}{\mu(|\xi|)^3|\xi|^4}\right).
\end{align*}
Here, one realizes that
\begin{align*}
\lim\limits_{r\to\infty}\left(\frac{\mathrm{e}^{-c\mu(r)r^2t}}{\mu(r)r^2}\cdot\frac{\mu(r)^3r^4}{\mathrm{e}^{-\frac{ct}{\mu(r)}}}\right)=\lim\limits_{r\to\infty}\left(\mu(r)^2r^2\mathrm{e}^{\frac{ct}{\mu(r)}\left(1-\mu(r)^2r^2\right)}\right)=0,
\end{align*}
by taking Hypothesis \ref{Hyp_Diss} and the condition $\lim\nolimits_{r\to\infty}\mu(r)=\infty$. Hence, it implies
\begin{align*}
	\chi_{\extt}(\xi)\left(\frac{\mathrm{e}^{-c\mu(|\xi|)|\xi|^2t}}{\mu(|\xi|)|\xi|^2}+\frac{\mathrm{e}^{-\frac{ct}{\mu(|\xi|)}}}{\mu(|\xi|)^3|\xi|^4}\right)\lesssim\chi_{\extt}(\xi)\frac{\mathrm{e}^{-\frac{ct}{\mu(|\xi|)}}}{\mu(|\xi|)^3|\xi|^4},
\end{align*}
which completes the proof immediately.
\end{proof}

\begin{prop}\label{Prop_Refine_small}
Let us assume that Hypothesis \ref{Hyp_Diss} holds. Then, the following pointwise estimates for the difference between two Fourier multipliers hold:
\begin{align}
\left|\chi_{\intt}(\xi)\left(\widehat{K}_0(t,|\xi|)-\widehat{\ml{H}}_0(t,|\xi|)\right)\right|&\lesssim\chi_{\intt}(\xi)\mu(|\xi|)|\xi|\mathrm{e}^{-c\mu(|\xi|)|\xi|^2t},\label{Difference_Small_1}\\
\left|\chi_{\intt}(\xi)\left(\widehat{K}_1(t,|\xi|)-\widehat{\ml{H}}_1(t,|\xi|)\right)\right|&\lesssim\chi_{\intt}(\xi)\mu(|\xi|)\mathrm{e}^{-c\mu(|\xi|)|\xi|^2t},\label{Difference_Small_2}
\end{align}
with $c>0$ for any $\xi\in\mb{R}^n$ and $t>0$.
\end{prop}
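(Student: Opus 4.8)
The plan is to turn the $\ml{O}$-expansions of $\lambda_{\pm}$ obtained just above into an \emph{exact} closed form that can be compared termwise with $\widehat{\ml{H}}_0,\widehat{\ml{H}}_1$. Since $\chi_{\intt}$ is supported in $\ml{Z}_{\intt}(\varepsilon)$, it is enough to prove \eqref{Difference_Small_1}--\eqref{Difference_Small_2} for $|\xi|\leqslant\varepsilon$, and (shrinking $\varepsilon$ if necessary) the limit $\lim_{r\downarrow0}r\mu(r)=0$ in Hypothesis \ref{Hyp_Diss} forces $\mu(|\xi|)|\xi|\leqslant1$ on that zone, so the characteristic roots are genuinely non-real: writing $\omega(|\xi|):=|\xi|\sqrt{1-\tfrac14\mu(|\xi|)^2|\xi|^2}$, one has \emph{exactly} $\lambda_{\pm}(|\xi|)=-\tfrac12\mu(|\xi|)|\xi|^2\pm i\omega(|\xi|)$ and $\lambda_+-\lambda_-=2i\omega(|\xi|)$. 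Plugging this into the representation \eqref{Representation} and simplifying with Euler's formula gives
\begin{align*}
\widehat{K}_0(t,|\xi|)&=\mathrm{e}^{-\frac12\mu(|\xi|)|\xi|^2t}\left(\cos(\omega(|\xi|)t)+\frac{\mu(|\xi|)|\xi|^2}{2\omega(|\xi|)}\sin(\omega(|\xi|)t)\right),\\
\widehat{K}_1(t,|\xi|)&=\mathrm{e}^{-\frac12\mu(|\xi|)|\xi|^2t}\,\frac{\sin(\omega(|\xi|)t)}{\omega(|\xi|)}.
\end{align*}
These differ from $\widehat{\ml{H}}_0$ and $\widehat{\ml{H}}_1$ only through the replacement of the phase $\omega(|\xi|)$ by $|\xi|$ and, in the $\widehat{K}_0$-case, through the extra coefficient $\frac{\mu(|\xi|)|\xi|^2}{2\omega(|\xi|)}$.

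Next I would record two elementary facts on $\ml{Z}_{\intt}(\varepsilon)$: first, since $\tfrac14\mu(|\xi|)^2|\xi|^2\leqslant\tfrac14$, one has $\tfrac{\sqrt3}{2}|\xi|\leqslant\omega(|\xi|)\leqslant|\xi|$ and, by $1-\sqrt{1-x}\leqslant x$, the phase correction obeys $\big||\xi|-\omega(|\xi|)\big|\lesssim\mu(|\xi|)^2|\xi|^3$; second, $r\mu(r)$ is continuous on $[0,\varepsilon]$ and tends to $0$ at the origin, hence $\mu(|\xi|)|\xi|\lesssim1$ there. Using $|\cos a-\cos b|\leqslant|a-b|$ together with $\sin a-\sin b=2\cos\tfrac{a+b}{2}\sin\tfrac{a-b}{2}$, and the splitting $\frac{\sin(\omega t)}{\omega}-\frac{\sin(|\xi|t)}{|\xi|}=\frac{\sin(\omega t)-\sin(|\xi|t)}{\omega}+\sin(|\xi|t)\big(\tfrac1{\omega}-\tfrac1{|\xi|}\big)$, these two facts yield
\begin{align*}
\left|\chi_{\intt}(\xi)\big(\widehat{K}_0(t,|\xi|)-\widehat{\ml{H}}_0(t,|\xi|)\big)\right|&\lesssim\chi_{\intt}(\xi)\,\mathrm{e}^{-\frac12\mu(|\xi|)|\xi|^2t}\big(\mu(|\xi|)^2|\xi|^3t+\mu(|\xi|)|\xi|\big),\\
\left|\chi_{\intt}(\xi)\big(\widehat{K}_1(t,|\xi|)-\widehat{\ml{H}}_1(t,|\xi|)\big)\right|&\lesssim\chi_{\intt}(\xi)\,\mathrm{e}^{-\frac12\mu(|\xi|)|\xi|^2t}\big(\mu(|\xi|)^2|\xi|^2t+\mu(|\xi|)^2|\xi|\big).
\end{align*}

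Finally I would absorb the time factor into the dissipation: factoring $\mu(|\xi|)^2|\xi|^3t=\mu(|\xi|)|\xi|\cdot(\mu(|\xi|)|\xi|^2t)$ and $\mu(|\xi|)^2|\xi|^2t=\mu(|\xi|)\cdot(\mu(|\xi|)|\xi|^2t)$, and invoking $s\,\mathrm{e}^{-s/2}\lesssim\mathrm{e}^{-cs}$ for a fixed $c\in(0,\tfrac12)$, together with $\mu(|\xi|)^2|\xi|=\mu(|\xi|)\cdot\mu(|\xi|)|\xi|\lesssim\mu(|\xi|)$ and $\mathrm{e}^{-\frac12\mu(|\xi|)|\xi|^2t}\leqslant\mathrm{e}^{-c\mu(|\xi|)|\xi|^2t}$, one reaches precisely $\mu(|\xi|)|\xi|\,\mathrm{e}^{-c\mu(|\xi|)|\xi|^2t}$ in \eqref{Difference_Small_1} and $\mu(|\xi|)\,\mathrm{e}^{-c\mu(|\xi|)|\xi|^2t}$ in \eqref{Difference_Small_2}. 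The one genuine obstacle is the bookkeeping of the phase correction $\omega(|\xi|)-|\xi|$: comparing $\cos(\omega t)$ with $\cos(|\xi|t)$ (and likewise the sine quotients) costs a linear-in-$t$ factor, and one must see this is harmless because $\omega(|\xi|)-|\xi|$ is only of size $\mu(|\xi|)^2|\xi|^3$, so the product $\mu(|\xi|)^2|\xi|^3t$ contains one copy of $\mu(|\xi|)|\xi|^2t$ that the Gaussian factor $\mathrm{e}^{-\frac12\mu(|\xi|)|\xi|^2t}$ swallows, leaving exactly the weight $\mu(|\xi|)|\xi|$ on the right of \eqref{Difference_Small_1}. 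If one prefers to stay with the $\ml{O}$-expansions rather than the exact formulas, the same difficulty resurfaces as controlling $\mathrm{e}^{\ml{O}(\mu(|\xi|)^2|\xi|^3)t}-1$ via $|\mathrm{e}^z-1|\leqslant|z|\mathrm{e}^{|z|}$, where the smallness of $\mu(|\xi|)|\xi|$ keeps the exponent below $\tfrac14\mu(|\xi|)|\xi|^2t$; the exact closed form merely makes this step transparent.
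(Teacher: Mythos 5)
Your proof is correct, and it reaches the stated bounds by a somewhat cleaner route than the paper's. The paper works with the asymptotic expansions $\lambda_{\pm}(|\xi|)=\pm i|\xi|-\tfrac12\mu(|\xi|)|\xi|^2+\ml{O}(\mu(|\xi|)^2|\xi|^3)$, substitutes them into \eqref{Representation}, and estimates the resulting differences term by term, controlling the factors $\mathrm{e}^{\ml{O}(\mu(|\xi|)^2|\xi|^3)t}-1$ through the integral identity $\mathrm{e}^z-1=z\int_0^1\mathrm{e}^{zs}\,\mathrm{d}s$ and then absorbing the linear-in-$t$ loss exactly as you do. You instead observe that on $\ml{Z}_{\intt}(\varepsilon)$ the roots form exactly the conjugate pair $-\tfrac12\mu(|\xi|)|\xi|^2\pm i\omega(|\xi|)$, which turns $\widehat{K}_0,\widehat{K}_1$ into exact damped trigonometric expressions; the whole comparison then reduces to the elementary phase estimate $\bigl||\xi|-\omega(|\xi|)\bigr|\lesssim\mu(|\xi|)^2|\xi|^3$ together with $|\cos a-\cos b|\leqslant|a-b|$ and the analogous sine bounds. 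What the exact form buys is that the real part of the exponent is exactly $-\tfrac12\mu(|\xi|)|\xi|^2$, so you never need to argue that the $\ml{O}(\mu(|\xi|)^2|\xi|^3)$ correction in the exponent is dominated by the dissipative term (a point the paper's version handles implicitly via $\mu(|\xi|)|\xi|\ll1$ on the support of $\chi_{\intt}$). The key quantitative step --- trading one factor of $\mu(|\xi|)|\xi|^2t$ against $\mathrm{e}^{-\frac12\mu(|\xi|)|\xi|^2t}$ so that the phase error $\mu(|\xi|)^2|\xi|^3t$ collapses to the weight $\mu(|\xi|)|\xi|$ in \eqref{Difference_Small_1}, and likewise $\mu(|\xi|)^2|\xi|^2t$ to $\mu(|\xi|)$ in \eqref{Difference_Small_2} --- is identical in both arguments, and you have correctly identified it as the only genuine obstacle.
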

\begin{proof}[Proof of Proposition \ref{Prop_Refine_small}]
With the purpose of deriving \eqref{Difference_Small_1}, the kernel may read as
\begin{align*}
\widehat{\ml{H}}_0(t,|\xi|)=\frac{\left(\mathrm{e}^{i|\xi|t}+\mathrm{e}^{-i|\xi|t}\right)i|\xi|}{2i|\xi|}\mathrm{e}^{-\frac{1}{2}\mu(|\xi|)|\xi|^2t},
\end{align*}
and then, one arrives at
\begin{align*}
&\left|\chi_{\intt}(\xi)\left(\widehat{K}_0(t,|\xi|)-\widehat{\ml{H}}_0(t,|\xi|)\right)\right|\\
&\quad \lesssim\sum\limits_{\pm} \chi_{\intt}(\xi)\left|\frac{i|\xi|\mp\frac{\mu(|\xi|)|\xi|^2}{2}+\ml{O}(\mu(|\xi|)^2|\xi|^3)}{2i|\xi|+\ml{O}(\mu(|\xi|)^2|\xi|^3)}\mathrm{e}^{\left(\mp i|\xi|-\frac{\mu(|\xi|)|\xi|^2}{2}+\ml{O}(\mu(|\xi|)^2|\xi|^3)\right)t}-\frac{i|\xi|}{2i|\xi|}\mathrm{e}^{\left(\mp i|\xi|-\frac{\mu(|\xi|)|\xi|^2}{2}\right)t} \right|\\
&\quad \lesssim\sum\limits_{\pm}\chi_{\intt}(\xi)\left|\frac{\mp i\mu(|\xi|)|\xi|^3+\ml{O}(\mu(|\xi|)^2|\xi|^4)}{2i|\xi|(2i|\xi|+\ml{O}(\mu(|\xi|)^{2}|\xi|^3))}\right|\mathrm{e}^{\left(\mp i|\xi|-\frac{\mu(|\xi|)|\xi|^2}{2}+\ml{O}(\mu(|\xi|)^2|\xi|^3)\right)t}\\
&\quad\quad+\sum\limits_{\pm}\chi_{\intt}(\xi)\mathrm{e}^{\left(\mp i|\xi|-\frac{\mu(|\xi|)|\xi|^2}{2}\right)t}\ml{O}(\mu(|\xi|)^2|\xi|^3)t\left|\int_0^1\mathrm{e}^{\ml{O}(\mu(|\xi|)^2|\xi|^3)ts}\mathrm{d}s\right|.
\end{align*}
It yields
\begin{align*}
	\left|\chi_{\intt}(\xi)\left(\widehat{K}_0(t,|\xi|)-\widehat{\ml{H}}_0(t,|\xi|)\right)\right|&\lesssim\chi_{\intt}(\xi)\left(\mu(|\xi|)|\xi|+\mu(|\xi|)^2|\xi|^3t\right)\mathrm{e}^{-c\mu(|\xi|)|\xi|^2t}\\
	&\lesssim\chi_{\intt}(\xi)\mu(|\xi|)|\xi|\mathrm{e}^{-c\mu(|\xi|)|\xi|^2t}.
\end{align*}

Let us turn to the other estimate \eqref{Difference_Small_2}, where we reformulate $\widehat{\ml{H}}_1(t,|\xi|)$ as follows:
\begin{align*}
	\widehat{\ml{H}}_1(t,|\xi|)=\frac{\mathrm{e}^{i|\xi|t}-\mathrm{e}^{-i|\xi|t}}{2i|\xi|}\mathrm{e}^{-\frac{\mu(|\xi|)|\xi|^2}{2}t}.
\end{align*}
Consequently, it leads to
\begin{align*}
&\left|\chi_{\intt}(\xi)\left(\widehat{K}_1(t,|\xi|)-\widehat{\ml{H}}_1(t,|\xi|)\right)\right|\\
&\qquad\lesssim\sum\limits_{\pm}\chi_{\intt}(\xi)\left|\frac{1}{2i|\xi|+\ml{O}(\mu(|\xi|)^2|\xi|^3)}-\frac{1}{2i|\xi|}\right|\mathrm{e}^{\left(\pm i|\xi|-\frac{\mu(|\xi|)|\xi|^2}{2}+\ml{O}(\mu(|\xi|)^2|\xi|^3)\right)t}\\
&\qquad\quad+\sum\limits_{\pm}\chi_{\intt}(\xi)\frac{\mathrm{e}^{-\frac{\mu(|\xi|)|\xi|^2}{2}t}}{|\xi|}\left|\mathrm{e}^{\pm i|\xi|t}\left(\mathrm{e}^{\ml{O}(\mu(|\xi|)^2|\xi|^3)t}-1\right)\right|\\
&\qquad\lesssim\chi_{\intt}(\xi)\left(\mu(|\xi|)^2|\xi|+\mu(|\xi|)^2|\xi|^2t\right)\mathrm{e}^{-c\mu(|\xi|)|\xi|^2t}\lesssim\chi_{\intt}(\xi)\mu(|\xi|)\mathrm{e}^{-c\mu(|\xi|)|\xi|^2t},
\end{align*}
where Hypothesis \ref{Hyp_Diss} was used again in the last line. All in all, our proof is finished.
\end{proof}
\begin{remark}
In Proposition \ref{Prop_Refine_large}, under the condition $\lim\nolimits_{|\xi|\to\infty}\mu(|\xi|)=\infty$ by subtracting the Fourier multipliers $\widehat{\ml{G}}_j(t,|\xi|)$ for $j=0,1$, respectively, the estimates have been improved (comparing with the third estimate in Proposition \ref{Prop_RE_Est}) by the factor $1/(\mu(|\xi|)|\xi|)^2$ for $\xi\in\ml{Z}_{\extt}(N)$. Moreover, if one subtracts the Fourier multipliers $\widehat{\ml{H}}_j(t,|\xi|)$ for $j=0,1$, individually, then the corresponding estimates have been enhanced (comparing with the first estimate in Proposition \ref{Prop_RE_Est}) by the factor $\mu(|\xi|)|\xi|$ for $\xi\in\ml{Z}_{\intt}(\varepsilon)$. It suggests large-time approximations.
\end{remark}
\subsection{Asymptotic profiles of solution}
To begin with this part, let us take the operators
\begin{align*}
		{\ml{G}}_0(t,|D|):=\mathrm{e}^{-\frac{t}{\mu(|D|)}} \ \ &\mbox{and}\ \ {\ml{G}}_1(t,|D|):=\frac{1}{\mu(|D|)|D|^2}\mathrm{e}^{-\frac{t}{\mu(|D|)}},\\
	{\ml{H}}_0(t,|D|):=\cos(|D|t)\mathrm{e}^{-\frac{1}{2}\mu(|D|)|D|^2t}\ \ &\mbox{and}\ \ {\ml{H}}_1(t,|D|):=\frac{\sin(|D|t)}{|D|}\mathrm{e}^{-\frac{1}{2}\mu(|D|)|D|^2t},
\end{align*}
whose corresponding symbols with respect to spatial pseudo-differential parts were shown at the beginning of Subsection \ref{Sub-Sec_Kernel_Profiles}. From the action of these operators on the corresponding initial data $u_0$ or $u_1$, we may introduce two functions as follows:
\begin{align}\label{v_extt}
v_{\extt}(t,x)&:=\chi_{\extt}(D)\left(\ml{G}_0(t,|D|)u_0(x)+\ml{G}_1(t,|D|)u_1(x)\right),\\
v_{\intt}(t,x)&:=\chi_{\intt}(D)\left(\ml{H}_0(t,|D|)u_0(x)+\ml{H}_1(t,|D|)u_1(x)\right).\label{v_intt}
\end{align}

The first profile exists in the case $\lim\nolimits_{r\to\infty}\mu(r)=\infty$. It can be represented by 
\begin{align*}
\ml{G}_0(t,|D|)u_0(x)+\ml{G}_1(t,|D|)u_1(x)=\mathrm{e}^{-\frac{t}{\mu(|D|)}}\left(u_0(x)+\frac{1}{\mu(|D|)|D|^2}u_1(x)\right),
\end{align*}
which solves the evolution equation
\begin{align}\label{First_Prof_omega}
	\mu(|D|)\omega_t+\omega=0 \ \ \mbox{with}\ \ \omega(0,x)=u_0(x)+\frac{1}{\mu(|D|)|D|^2}u_1(x).
\end{align}
Therefore, the solution to the general strongly damped waves \eqref{Eq_General_Damped_Waves} can be partly described \eqref{First_Prof_omega} if $\lim\nolimits_{r\to\infty}\mu(r)=\infty$.

The second profile appears in all cases, and can be rewritten by
\begin{align*}
	\ml{H}_0(t,|D|)u_0(x)+\ml{H}_1(t,|D|)u_1(x)=\underbrace{\exp\left(-\frac{1}{2}\mu(|D|)|D|^2t\right)}_{\mbox{diffusion-like part}}\underbrace{\left(\cos(|D|t)u_0(x)+\frac{\sin(|D|t)}{|D|}u_1(x)\right)}_{\mbox{half-wave part}}.
\end{align*}
This means that the solution to the general strongly damped waves \eqref{Eq_General_Damped_Waves} can be partly described by the combination of
\begin{itemize}
	\item diffusion-like equation $\varphi_t-\frac{1}{2}\mu(|D|)\Delta\varphi=0$;
	\item half-wave equation $\psi_t\pm i|D|\psi=0$;
\end{itemize}
carrying suitable initial datum.

\begin{theorem}\label{Thm_Asymptotic_Profiles}
Let us assume that Hypothesis \ref{Hyp_Diss} as well as $\mu(r)\lesssim r^{-\frac{1}{2}+\delta_0}$ for any $r\leqslant\varepsilon\ll 1$ with sufficiently small $\delta_0>0$ hold. Then, the difference between the solution to the Cauchy problem \eqref{Eq_Linear_General_Damped_Waves} and \eqref{v_intt} (or \eqref{v_extt}$+$\eqref{v_intt}) for $n\geqslant 1$ fulfills the following refined estimates:
\begin{align*}
\|u(t,\cdot)-v_{\intt}(t,\cdot)-v_{\extt}(t,\cdot)\|_{L^2}&\lesssim (1+t)^{-\ell_0-2}\|u_0\|_{\dot{H}^{-2}_{\mu,\ell_0}}+(1+t)^{-\frac{1}{2}\alpha_{n,-1}^m+\delta-1}\|u_0\|_{L^1}\\
&\quad+(1+t)^{-\ell_1-3}\|u_1\|_{\dot{H}^{-4}_{\mu,\ell_1}}+(1+t)^{-\frac{1}{2}\alpha_{n,-2}^m+\delta-1}\|u_1\|_{L^1}
\end{align*}
if $\lim\nolimits_{r\to\infty}\mu(r)=\infty$; moreover,
\begin{align*}
	\|u(t,\cdot)-v_{\intt}(t,\cdot)\|_{L^2}&\lesssim (1+t)^{-\frac{1}{2}\alpha_{n,-1}^m+\delta-1}\|u_0\|_{L^2\cap L^1}+(1+t)^{-\frac{1}{2}\alpha_{n,-2}^m+\delta-1}\|u_1\|_{L^2\cap L^1}
\end{align*}
if $\lim\nolimits_{r\to\infty}\mu(r)<\infty$; with $\ell_0,\ell_1\geqslant0$ for any $\delta>0$, where the number $\alpha_{n,k-1}^m-2\delta\in A_{\intt}^{n,k-1}$ for $k=-1,0$.
\end{theorem}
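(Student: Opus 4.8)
The plan is to decompose $u(t,\cdot)$ according to the three frequency zones via the cutoff functions $\chi_{\intt},\chi_{\bdd},\chi_{\extt}$, match each piece against the corresponding part of $v_{\intt}+v_{\extt}$, and estimate the differences using the pointwise kernel estimates already established. In the Fourier space the solution is $\hat u(t,\xi)=\widehat K_0(t,|\xi|)\hat u_0(\xi)+\widehat K_1(t,|\xi|)\hat u_1(\xi)$ by \eqref{Representation}, while $v_{\intt}$ carries the symbols $\widehat{\ml H}_j$ and $v_{\extt}$ carries the symbols $\widehat{\ml G}_j$. So by Plancherel the task splits into controlling $\|\chi_{\intt}(\xi)(\widehat K_j-\widehat{\ml H}_j)\hat u_j\|_{L^2}$, $\|\chi_{\extt}(\xi)(\widehat K_j-\widehat{\ml G}_j)\hat u_j\|_{L^2}$ for $j=0,1$, together with the purely $\chi_{\bdd}$ contributions of $u$, which decay exponentially by Proposition \ref{Prop_RE_Est}.

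First I would handle the large-frequency zone (only in the case $\lim_{r\to\infty}\mu(r)=\infty$, since otherwise $v_{\extt}$ is absent and the $\chi_{\extt}$-part of $u$ is exponentially small by Theorem \ref{Thm_Solution_Itself}). Proposition \ref{Prop_Refine_large} gives
$|\chi_{\extt}(\xi)(\widehat K_0-\widehat{\ml G}_0)|\lesssim \chi_{\extt}(\xi)\mu(|\xi|)^{-2}|\xi|^{-2}\mathrm{e}^{-ct/\mu(|\xi|)}$ and
$|\chi_{\extt}(\xi)(\widehat K_1-\widehat{\ml G}_1)|\lesssim\chi_{\extt}(\xi)\mu(|\xi|)^{-3}|\xi|^{-4}\mathrm{e}^{-ct/\mu(|\xi|)}$. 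Writing $\mu(|\xi|)^{-2}|\xi|^{-2}=\mu(|\xi|)^{-1}\cdot\mu(|\xi|)^{-1}|\xi|^{-2}$ and $\mu(|\xi|)^{-3}|\xi|^{-4}=\mu(|\xi|)^{-1}\cdot\mu(|\xi|)^{-2}|\xi|^{-4}$, and absorbing one factor $t^{-1}$ from $t\,\mathrm{e}^{-ct/\mu(|\xi|)}/\mu(|\xi|)\lesssim\mathrm{e}^{-ct/\mu(|\xi|)}$, an application of Lemma \ref{Lem_Decay_Large} (with $s=-2$ for $u_0$ and $s=-4$ for $u_1$, and the respective $\ell=\ell_0,\ell_1$) yields exactly the $(1+t)^{-\ell_0-2}\|u_0\|_{\dot H^{-2}_{\mu,\ell_0}}$ and $(1+t)^{-\ell_1-3}\|u_1\|_{\dot H^{-4}_{\mu,\ell_1}}$ terms. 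One extra power of $(1+t)^{-1}$ on $u_1$ appears because $\mu(|\xi|)^{-2}|\xi|^{-4}$ supplies two powers of $\mu^{-1}$ rather than one; the bookkeeping here mirrors the end of the proof of Theorem \ref{Thm_Solution_Itself}.

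The small-frequency zone is the heart of the matter. Proposition \ref{Prop_Refine_small} gives
$|\chi_{\intt}(\xi)(\widehat K_0-\widehat{\ml H}_0)|\lesssim\chi_{\intt}(\xi)\mu(|\xi|)|\xi|\,\mathrm{e}^{-c\mu(|\xi|)|\xi|^2 t}$ and
$|\chi_{\intt}(\xi)(\widehat K_1-\widehat{\ml H}_1)|\lesssim\chi_{\intt}(\xi)\mu(|\xi|)\,\mathrm{e}^{-c\mu(|\xi|)|\xi|^2 t}$. For the $u_0$ term, after Hausdorff--Young I must bound $\|\chi_{\intt}(\xi)\mu(|\xi|)|\xi|\,\mathrm{e}^{-c\mu(|\xi|)|\xi|^2t}\|_{L^2}$; since $\mu(r)\lesssim r^{-1/2+\delta_0}$ near $r=0$, one has $\mu(|\xi|)|\xi|\lesssim|\xi|^{1/2+\delta_0}$, and I would instead carry the sharper bound $\mu(|\xi|)^{-\alpha}|\xi|^{-2\alpha}\cdot(\text{const})$ extracted from the exponential as in Lemma \ref{Lem_Decay_Small}; splitting $\mu(|\xi|)|\xi| = \big(\mu(|\xi|)|\xi|^2\big)\cdot|\xi|^{-1}$ and absorbing $\mu(|\xi|)|\xi|^2\cdot\mathrm{e}^{-c\mu(|\xi|)|\xi|^2t}\lesssim t^{-1}\mathrm{e}^{-c'\mu(|\xi|)|\xi|^2t}$ reduces this, for $t\geq 1$, to $t^{-1}\|\chi_{\intt}(\xi)|\xi|^{-1}\mathrm{e}^{-c'\mu(|\xi|)|\xi|^2t}\|_{L^2}\lesssim t^{-1}(1+t)^{-\frac12\alpha_{n,-1}^m+\delta}$ by Lemma \ref{Lem_Decay_Small} with $s=-1$; for $t\leq 1$ the factor $|\sin(|\xi|t)|\lesssim |\xi|t$ hidden in $\widehat{\ml H}_1$ — and the analogous cancellation $|\cos(|\xi|t)-1|\lesssim |\xi|^2t^2$ — keeps the integral finite, exactly as in the treatment of $I_3(t)$ in the proof of Theorem \ref{Thm_Solution_Itself}. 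The $u_1$ term is handled the same way: $\mu(|\xi|)=\big(\mu(|\xi|)|\xi|^2\big)|\xi|^{-2}\lesssim t^{-1}|\xi|^{-2}\mathrm{e}^{c''\mu(|\xi|)|\xi|^2t}$ against the exponential, so Lemma \ref{Lem_Decay_Small} with $s=-2$ delivers $t^{-1}(1+t)^{-\frac12\alpha_{n,-2}^m+\delta}$. The assumption $\mu(r)\lesssim r^{-1/2+\delta_0}$ is precisely what guarantees $\alpha_{n,-2}^m>0$ (i.e. that $\int_0^\varepsilon r^{n-5-2\alpha}\mu(r)^{-\alpha}\,dr$ converges for some $\alpha>0$), so that $s=-2$ is admissible in Lemma \ref{Lem_Decay_Small}; this is the delicate point, and the main obstacle will be verifying that the small-time pieces and the $t^{-1}$-extraction are compatible with the low regularity $\dot H^{-2}$ and the singular weight $|\xi|^{-2}$ without creating a non-integrable singularity at $\xi=0$. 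Finally the bounded-frequency contributions of $u$, $v_{\intt}$, $v_{\extt}$ are each exponentially decaying (Proposition \ref{Prop_RE_Est} and the boundedness of the symbols $\widehat{\ml H}_j,\widehat{\ml G}_j$ on $\ml Z_{\bdd}$), hence negligible, and collecting the three zones gives the stated estimate.
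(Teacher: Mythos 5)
Your proposal reproduces the paper's own argument essentially step for step: Plancherel plus the three-zone splitting, Propositions \ref{Prop_Refine_large} and \ref{Prop_Refine_small} for the pointwise kernel differences, extraction of powers of $t^{-1}$ from $\mu(|\xi|)^{-1}\mathrm{e}^{-ct/\mu(|\xi|)}$ and from $\mu(|\xi|)|\xi|^2\mathrm{e}^{-c\mu(|\xi|)|\xi|^2t}$, and then Lemmas \ref{Lem_Decay_Large} and \ref{Lem_Decay_Small} with the shifted exponents $s=-2,-4$ and $s=-1-j$. The only cosmetic differences are that for the exterior zone you must absorb two (resp.\ three) factors of $t^{-1}$ rather than one to land on $(1+t)^{-\ell_0-2}$ and $(1+t)^{-\ell_1-3}$ with the norms $\dot{H}^{-2}_{\mu,\ell_0}$, $\dot{H}^{-4}_{\mu,\ell_1}$, and that for bounded time the paper simply uses $\|\chi_{\intt}(\xi)\mu(|\xi|)\|_{L^2}\lesssim 1$ (which is what the hypothesis $\mu(r)\lesssim r^{-1/2+\delta_0}$ is really for) rather than the sine/cosine cancellation you invoke.
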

\begin{remark}
The additional technical assumption $\mu(r)\lesssim r^{-\frac{1}{2}+\delta_0}$ for any $r\leqslant\varepsilon\ll 1$ with sufficiently small $\delta_0>0$ can be removed if we just study the case $n\geqslant 3$. For another, this consideration is always holding in our cases, e.g. $\mu(r)=r^{\epsilon}$ and $\mu(r)=\log(\mathrm{e}+r)$.
\end{remark}
\begin{remark}
We actually can rewrite the estimates in Theorem \ref{Thm_Solution_Itself} by
\begin{align*}
	\|u(t,\cdot)\|_{L^2}&\lesssim (1+t)^{-\ell_0}\|u_0\|_{\dot{H}^{0}_{\mu,\ell_0}}+(1+t)^{-\frac{1}{2}\alpha_{n,0}^m+\delta}\|u_0\|_{L^1}\\
	&\quad+(1+t)^{-\ell_1-1}\|u_1\|_{\dot{H}^{-2}_{\mu,\ell_1}}+(1+t)^{-\frac{1}{2}\alpha_{n,-1}^m+\delta}\|u_1\|_{L^1}
\end{align*}
if $\lim\nolimits_{r\to\infty}\mu(r)=\infty$; moreover,
\begin{align*}
	\|u(t,\cdot)\|_{L^2}&\lesssim (1+t)^{-\frac{1}{2}\alpha_{n,0}^m+\delta}\|u_0\|_{L^2\cap L^1}+(1+t)^{-\frac{1}{2}\alpha_{n,-1}^m+\delta}\|u_1\|_{L^2\cap L^1}
\end{align*}
if $\lim\nolimits_{r\to\infty}\mu(r)<\infty$. Therefore, by subtracting the function $v_{\intt}(t,\cdot)$ in the $L^2$ norm, there are enhanced decay rates $(1+t)^{-1+\frac{1}{2}(\alpha_{n,k-1}^m-\alpha_{n,k-2}^m)}$ to $\|u_k\|_{L^1}$ for $k=0,1$ carrying $\alpha_{n,k-1}^m-\alpha_{n,k-2}^m\approx 1$. Furthermore, if one subtracts $v_{\extt}(t,\cdot)$ in the $L^2$ norm, we not only get enhanced decay rate $(1+t)^{-2}$ but also obtain reduced regularity $\dot{H}^{-2}$ for Sobolev regular datum. These nomenclatures have been clarified in the introduction.
\end{remark}
\begin{remark}
According to the last remark, we may assert that $v_{\intt}(t,x)$ and $v_{\extt}(t,x)$ are the asymptotic profiles of solutions for the wave equation with general strong damping \eqref{Eq_Linear_General_Damped_Waves} if Hypothesis \ref{Hyp_Diss} holds. Here, the condition \eqref{Condition_Regularity_Loss} is also a significant threshold for distinguishing different profiles of solution.
\end{remark}
\begin{proof}[Proof of Theorem \ref{Thm_Asymptotic_Profiles}]
We only show the verification of the first estimate in Theorem \ref{Thm_Asymptotic_Profiles} because the second one can be proved easily by the same procedure without asking for large frequencies (exponential decay estimates). By applying the Plancherel theorem, we deduce
\begin{align*}
&\|u(t,\cdot)-v_{\intt}(t,\cdot)-v_{\extt}(t,\cdot)\|_{L^2}\\
&\qquad\lesssim\sum\limits_{j=0,1}\left\|\chi_{\intt}(\xi)\left(\widehat{K}_j(t,|\xi|)-\widehat{\ml{H}}_j(t,|\xi|)\right)\right\|_{L^2}\|u_j\|_{L^1}+\mathrm{e}^{-ct}\left(\|u_0\|_{\dot{H}^{-2}_{\mu,\ell_0}}+\|u_1\|_{\dot{H}^{-4}_{\mu,\ell_1}}\right)\\
&\qquad\quad+\sum\limits_{j=0,1}\left\|\chi_{\extt}(\xi)\left(\widehat{K}_j(t,|\xi|)-\widehat{\ml{G}}_j(t,|\xi|)\right)\hat{u}_j(\xi)\right\|_{L^2}\\
&\qquad\lesssim \sum\limits_{j=0,1}\left\|\chi_{\intt}(\xi)\mu(|\xi|)|\xi|^{1-j}\mathrm{e}^{-c\mu(|\xi|)|\xi|^2t}\right\|_{L^2}\|u_j\|_{L^1}+\mathrm{e}^{-ct}\left(\|u_0\|_{\dot{H}^{-2}_{\mu,\ell_0}}+\|u_1\|_{\dot{H}^{-4}_{\mu,\ell_1}}\right)\\
&\qquad\quad+\sum\limits_{j=0,1}\left\|\chi_{\extt}(\xi)\mu(|\xi|)^{-2-j}|\xi|^{-2-2j}\mathrm{e}^{-\frac{ct}{\mu(|\xi|)}}\hat{u}_j(\xi)\right\|_{L^2},
\end{align*}
where we combined Propositions \ref{Prop_Refine_large} as well as \ref{Prop_Refine_small}. Eventually, one may use Lemmas \ref{Lem_Decay_Large} and \ref{Lem_Decay_Small} again to show
\begin{align*}
\left\|\chi_{\intt}(\xi)\mu(|\xi|)|\xi|^{1-j}\mathrm{e}^{-c\mu(|\xi|)|\xi|^2t}\right\|_{L^2}&\lesssim t^{-t}\left\|\chi_{\intt}(\xi)|\xi|^{-j-1}\mathrm{e}^{-c\mu(|\xi|)|\xi|^2t}\right\|_{L^2}\\
&\lesssim t^{-1-\frac{1}{2}\alpha_{n,-j-1}^m+\delta}
\end{align*}
for large-time, and
\begin{align*}
\left\|\chi_{\intt}(\xi)\mu(|\xi|)|\xi|^{1-j}\mathrm{e}^{-c\mu(|\xi|)|\xi|^2t}\right\|_{L^2}\lesssim\|\chi_{\intt}(\xi)\mu(|\xi|)\|_{L^2}\lesssim 1	
\end{align*}
for bounded-time by using the integrable condition; moreover,
\begin{align*}
	\left\|\chi_{\extt}(\xi)\mu(|\xi|)^{-2-j}|\xi|^{-2-2j}\mathrm{e}^{-\frac{ct}{\mu(|\xi|)}}\hat{u}_j(\xi)\right\|_{L^2}&\lesssim(1+t)^{-2-j}\left\|\chi_{\extt}(\xi)|\xi|^{-2-2j}\mathrm{e}^{-\frac{ct}{\mu(|\xi|)}}\hat{u}_j(\xi)\right\|_{L^2}\\
	&\lesssim (1+t)^{-2-j-\ell_j}\|u_j\|_{\dot{H}^{-2-2j}_{\mu,\ell_j}}.
\end{align*}
So, our demonstration is completed.
\end{proof}
\begin{remark}
By taking $\mu(r)=r^{\sigma}$ with $\sigma\in[0,\infty)$, our result in Theorem \ref{Thm_Asymptotic_Profiles} can almost coincide (with sufficiently small $\delta$-loss on the decay rate) with \cite[Theorem 1.1]{Ikehata=2014} when $\sigma=0$, \cite[Theorem 1.3]{Ikehata-Iyota=2018} when $\sigma=1$, \cite[Theorem 4.7]{Fukushima-Ikehata-Michihisa=2021} when $\sigma>0$. Of course, since a small loss in the decay rate, the threshold of regularity combined with dimension will be slightly modified. For example, in \cite[Estimate in (i), Theorem 4.7]{Fukushima-Ikehata-Michihisa=2021}, the first estimate holds only when $0\leqslant\ell\leqslant n/4-9/2-2\delta$ with $n>18$. Sometimes, the sufficiently small gap comparing with concrete examples cannot easily avoid because we are dealing with a quite general situation.
\end{remark}

\section{Final remarks}
As a pathfinder for wave models with general strong damping, we already determined some upper bound estimates of solutions in the $L^2$ norm, and asymptotic profiles of solutions by using refined WKB analysis. There are some interesting topics to be the continuity of this project, e.g. optimal estimates with weighted $L^1$ data and higher-order expansions of solutions in the general framework.

 So far the question for lower bound estimates of solution in the $L^2$ norm (carrying some suitable  hypotheses on $\mu(|D|)$) to verify the sharpness of Theorem \ref{Thm_Solution_Itself} is still open, but we believe Theorem \ref{Thm_Asymptotic_Profiles} will play a crucial role in such proof. Concerning the analytic class (with respect to small $r$ and $1/r$) for $\mu(r)$, we conjecture the optimal growth/decay estimates can be proved by following the procedure as those in \cite{Ikehata=2014,Ikehata-Onodera=2017,Ikehata-Iyota=2018}. Nonetheless, to consider this problem in the general framework on $\mu(|D|)$ rather than the analytic class is a challenging but interesting work.
 
 Another attractive question is the critical exponent, i.e. the threshold condition for global (in time) existence of solution and blow-up of solution, for semilinear Cauchy problem, namely,
 \begin{align}
 \begin{cases}
 u_{tt}-\Delta u-\mu(|D|)\Delta u_t=|u|^p,&x\in\mb{R}^n,\ t>0,\\
 u(0,x)=u_0(x),\ u_t(0,x)=u_1(x),&x\in\mb{R}^n,
 \end{cases}
 \end{align}
with $p>1$. By using our derived estimates in Theorem \ref{Thm_Solution_Itself} and \ref{Thm_Energy_Decay}, we conjecture that one may demonstrate global (in time) existence of small data Sobolev solution under some conditions on the exponent $p$. The main tools are based on the fractional Gagliardo-Nirenberg inequality, the fractional chain rule, the fractional Leibniz rule (see, for example, \cite{Palmieri-Reissig=2018}) and Banach's fixed point theory. However, to judge the critical exponent $p=p_{\mathrm{crit}}$, one needs to derive sharp blow-up condition for $p$, which is a completely open problem even for $\mu(|D|)\equiv1$.

\section*{Acknowledgments}
 The second author was supported in part by Grant-in-Aid for scientific Research (C) 20K03682 of JSPS. The authors thank Michael Reissig (TU Bergakademie Freiberg) for the suggestions in the preparation of the paper.

% ------------------------------------------------------------------------
\end{document}